\newtheorem{theorem}{Theorem}[section]
\newtheorem{thm}[theorem]{Theorem}
\newtheorem{coro}[theorem]{Corollary}
\newtheorem*{lem}[theorem]{Lemma}
\newtheorem{quest}[theorem]{Question}
\newtheorem*{fuzzquest}[theorem]{Fuzzy question}
\newtheorem*{family}[theorem]{Family of questions}
\newtheorem{prop}[theorem]{Proposition}
\newenvironment{rem}[1][Remark.]{\begin{trivlist}
\item[\hskip \labelsep {\itshape #1}]}{\end{trivlist}}
\newenvironment{obs}[1][Observation.]{\begin{trivlist}
\item[\hskip \labelsep {\itshape #1}]}{\end{trivlist}}
\newenvironment{exam}[1][Example.]{\begin{trivlist}
\item[\hskip \labelsep {\textit{#1}}]}{\end{trivlist}}
\def\sqw{\hbox{\rlap{\leavevmode\raise.3ex\hbox{$\sqcap$}}$%
\sqcup$}}
\newcommand{\N}{\ensuremath{{\mathbb N}}}
\newcommand{\Z}{\ensuremath{\mathbb Z}}
\newcommand{\rpdeux}{\ensuremath{P}}
\newcommand{\defini}{\textbf}
\newcommand{\field}{\ensuremath{\mathbb{K}}}
\newcommand{\R}{\ensuremath{\mathbb R}}
\newcommand{\Q}{\ensuremath{\mathbb Q}}
\newcommand{\C}{\ensuremath{\mathbb C}}
\def\acts{\curvearrowright}
\newcommand{\Cpts}{\ensuremath{\mathsf{C}}}
\newcommand{\Stab}{\mathsf{Stab}}
\newcommand{\Aut}{\mathsf{Aut}}
\newcommand{\saut}{\vspace{0.25cm}}
\newenvironment{proof}{  
    \vspace*{-.4em}  {\it Proof.}%
}{
    \hfill\sqw\vspace*{.5em}
}
\newenvironment{proofnice}{  
    \vspace*{-.4em}  {\it Proof of Theorem~\ref{nice}.}%
}{
    \hfill\sqw\vspace*{.5em}
}
\author{S\'ebastien \sc{Martineau}\footnote{E-mail: sebastien.martineau@weizmann.ac.il.}\\
\\
{\it The Weizmann Institute of Science}}
\title{On rotarily transitive graphs}
\begin{document}
\maketitle

\begin{abstract}
From the point of view of discrete geometry, the class of locally finite transitive graphs is a wide and important one. The subclass of Cayley graphs is of particular interest, as testifies the development of geometric group theory. Recall that Cayley graphs can be defined as non-empty locally finite connected graphs endowed with a transitive group action
such that any non-identity element acts without fixed point.

We define a class of transitive graphs which are transitive in an ``absolutely non-Cayley way'': we consider graphs endowed with a transitive group action
such that any element of the group acts with a fixed point. We call such graphs \emph{rotarily transitive graphs}, and we show that, even though there is no finite rotarily transitive graph with at least 2 vertices, there is an infinite locally finite connected rotarily transitive graph. The proof is based on groups built by Ivanov which are finitely generated, of finite exponent and have a small number of conjugacy classes.

We also build infinite transitive graphs (which are not locally finite) any automorphism of which has a fixed point. This is done by considering ``unit distance graphs'' associated with the projective plane over suitable subfields of $\R$.
\end{abstract}

\section{Introduction}

This introduction splits into two parts. In a first part, we present the context, the general question concerning this paper and the answers we provide, in a way primarily meant to be straightforward. In the second part, we provide more detailed definitions and additional remarks, in order to make this paper as self-contained and comprehensive as possible.

\subsection{General introduction}

\label{straight}

A good framework for the study of discrete geometry is that of non-empty connected graphs where every vertex has only finitely many neighbours. Let us call such a graph a \defini{nice} graph. Among these, a wide and important class of graphs is that of transitive graphs\footnote{See e.g.\ \cite{benjaministflour}.}, which are the ones that look the same seen from any vertex. Formally, a graph is said to be \defini{transitive} (or \defini{homogeneous}) if its automorphism group acts transitively on its vertices.

Originating from the world of group theory, Cayley graphs form a very rich class of transitive graphs: a graph is a \defini{Cayley graph} if it is nice and can be endowed with a transitive group action such that any non-identity element acts without fixed point. (\footnote{Refer to \cite{delaharpe} for a presentation of the so-called geometric group theory and to \cite{sabid} for the equivalence between the ``generating system'' definition of a Cayley graph and the one presented here.})

In the opposite, one could be interested in graphs that can be endowed with a transitive group action such that any element of the group acts with at least one fixed point. Call such a graph a \defini{rotarily transitive graph}. One can also define a graph to be \defini{strongly rotarily transitive} if it is transitive and \emph{every} automorphism of the graph has at least one fixed point.
Studying rotarily transitive graphs amounts to grasping the ways there are for graphs to be transitive in a ``strongly non-Cayley way''.

\setcounter{theorem}{-1}

\begin{fuzzquest}
\label{qu:fuzzy}
Are there (strongly) rotarily transitive ``graphs''?
\end{fuzzquest}

Here are a few answers, depending on the meaning we give to ``graphs''.

\begin{prop}
\label{finitecase}
Any finite rotarily transitive graph has exactly 1 vertex.
\end{prop}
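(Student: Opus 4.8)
The plan is to discard the graph structure entirely and work only with the underlying transitive action on the finite vertex set $V$. The statement then reduces to a classical fact about finite permutation groups, essentially Jordan's theorem on derangements: a transitive action on a set of size at least $2$ must contain a fixed-point-free element. Since the rotarily transitive hypothesis forbids such an element, every size $\geq 2$ is ruled out, leaving only $n = 1$.

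First I would pass to a finite group. By hypothesis some group $G$ acts transitively on $V$ with every element fixing at least one vertex. As $V$ is finite, this action factors through the image $\bar G$ of $G$ in $\mathsf{Sym}(V)$, which is a finite group. Transitivity survives the quotient, and each element of $\bar G$, being the image of some $g \in G$, still fixes a vertex; so we may assume $G$ finite without loss of generality.

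Then I would apply the Cauchy--Frobenius (Burnside) formula, which expresses the number of orbits as the average number of fixed points:
\[ \#\{\text{orbits}\} \;=\; \frac{1}{|G|}\sum_{g \in G} \bigl|\mathrm{Fix}(g)\bigr|. \]
Transitivity makes the left-hand side equal to $1$, so $\sum_{g \in G}\lvert\mathrm{Fix}(g)\rvert = |G|$. Writing $n = |V|$, the identity alone contributes $\lvert\mathrm{Fix}(e)\rvert = n$, while each of the remaining $|G|-1$ elements contributes at least $1$ by hypothesis. Hence $|G| \geq n + (|G|-1)$, which forces $n \leq 1$; since the graph is non-empty, $n = 1$.

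There is no serious obstacle here: once the finiteness reduction is in place, the conclusion is an immediate consequence of Burnside's formula. The only step deserving a little care is that reduction, where one must check that passing to the finite quotient $\bar G$ preserves both transitivity and the property that every element has a fixed point. After that, the inequality $n + (|G|-1) \leq |G|$ does all the work.
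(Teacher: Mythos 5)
Your proof is correct and follows essentially the same route as the paper: reduce to a finite permutation group on the vertex set, apply the Cauchy--Frobenius lemma to equate the orbit count $1$ with the average number of fixed points, and use the by-rotations hypothesis to force $|V|=1$. The paper phrases the last step as ``all $F_g$ equal $1$, in particular $F_{\mathrm{id}}=1$,'' while you use the equivalent inequality $n+(|G|-1)\leq |G|$; the content is identical.
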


\begin{thm}
\label{nice}
There is a nice infinite rotarily transitive graph.
\end{thm}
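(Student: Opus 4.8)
The plan is to realize the graph as a coset graph and to concentrate all the difficulty in a single group-theoretic input coming from Ivanov's construction. First I would reduce Theorem~\ref{nice} to the following claim: there is a finitely generated infinite group $\Gamma$ together with a \emph{finite} subgroup $H$ such that every element of $\Gamma$ is conjugate to an element of $H$. Granting such a pair $(\Gamma,H)$, I take as vertex set the left coset space $\Gamma/H$, fix a finite symmetric generating set $S$ of $\Gamma$ with $e\notin S$, and declare $aH$ and $bH$ adjacent whenever $aH\ne bH$ and $a^{-1}b\in HSH$. Since $HSH$ is invariant under left and right multiplication by $H$, adjacency depends only on the cosets $aH,bH$; since $S=S^{-1}$ and $H=H^{-1}$ it is symmetric. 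So this defines a genuine graph $X$.

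Next I would verify that $X$ is nice and that the left translation action $\Gamma\acts\Gamma/H$ witnesses that $X$ is rotarily transitive. The action is transitive on cosets and preserves adjacency, since $c\cdot(aH,bH)=(caH,cbH)$ leaves the defining condition $a^{-1}b\in HSH$ unchanged; hence $\Gamma$ acts by graph automorphisms and transitively on vertices. Connectedness follows from $S$ generating $\Gamma$: along a word $s_1\cdots s_k$ in $S$, consecutive cosets are equal or adjacent. Local finiteness follows because $HSH=\bigcup_{s\in S}HsH$ is a union of finitely many left cosets, each $HsH$ splitting into $[H:H\cap sHs^{-1}]\le|H|$ of them; as $H$ and $S$ are finite the base vertex has finite degree, hence so does every vertex by transitivity. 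The graph is non-empty (the coset $H$ is a vertex) and infinite, because a finite subgroup of an infinite group has infinite index, so $[\Gamma:H]=\infty$. Finally, for any $\gamma\in\Gamma$ choose $a$ with $a^{-1}\gamma a\in H$; then $\gamma\cdot aH=a(a^{-1}\gamma a)H=aH$, so every group element fixes a vertex. Thus $X$ is a nice infinite rotarily transitive graph.

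It remains to produce the pair $(\Gamma,H)$, and this is the one genuinely hard input. Here I would invoke the finitely generated infinite groups of finite exponent $n$ constructed by Ivanov, in the strong form in which any two elements of the same order are conjugate (so that there are only finitely many conjugacy classes). A finite capturing subgroup then drops out almost for free: taking an element $x$ of order $n$ and setting $H=\langle x\rangle$, any $g\in\Gamma$ has order $d\mid n$ and is therefore conjugate to the order-$d$ element $x^{n/d}\in H$. Hence every element of $\Gamma$ is conjugate into the finite cyclic subgroup $H$, exactly as required. (If no single element of full order $n$ is available, I would instead take for $H$ a finite subgroup realizing every order that occurs in $\Gamma$ and run the same argument order by order.)

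The main obstacle is precisely this last step. Everything about the graph $X$ is elementary bookkeeping, whereas the existence of a finitely generated infinite group in which same-order elements are conjugate is a deep theorem resting on Ivanov's combinatorial machinery for Burnside-type groups. The only points that require care on my side are that the capturing subgroup be genuinely \emph{finite} (for local finiteness of $X$) and of infinite index (for infiniteness of $X$), and both of these I would extract from the finite-exponent and infiniteness properties of the group as indicated above.
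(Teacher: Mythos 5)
Your reduction and your graph construction are correct, and in fact they are essentially the paper's construction in different packaging: the paper also acts on a coset space $G/H$ for a finite subgroup $H$ that meets every conjugacy class (it realises $G/H$ as the conjugacy class of $g_1$ under conjugation, where $H=\langle g_1\rangle$ turns out to equal the centraliser of $g_1$, and it defines edges as the $G$-orbit of a single pair rather than via $HSH$), and it derives the rotation property exactly as you do, from the fact that every group element is conjugate into $H$. All of your bookkeeping --- well-definedness of the double-coset adjacency, transitivity, connectedness from $S$ generating $\Gamma$, local finiteness from $HSH$ being a union of at most $|H|\,|S|$ left cosets, infiniteness from $[\Gamma:H]=\infty$ --- is fine.

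The genuine gap is in the group-theoretic input. The ``strong form'' you invoke --- a finitely generated infinite group of finite exponent in which any two elements of the same order are conjugate --- is not what Ivanov's construction provides: for exponent a prime $p$ it would force exactly two conjugacy classes, whereas the available theorem (the paper's Theorem~2.1) gives an infinite group $G$, all of whose proper subgroups are cyclic of order $p$, with exactly $p$ conjugacy classes. In such a group $x$ and $x^2$ have the same order $p$ but are \emph{not} conjugate, so both your main derivation of the capturing subgroup and your fallback (``a finite subgroup realizing every order that occurs'') rest on a property that fails. The pair $(\Gamma,H)$ you need does still exist --- take $H=\langle x\rangle$ for any $x\neq 1$ --- but proving that $H$ meets every conjugacy class requires an argument you have not supplied: one must first show that distinct powers of an element of order $p$ are never conjugate (the paper does this via a short permutation argument on $\langle x\rangle$), so that the $p$ elements of $\langle x\rangle$ land in $p$ distinct classes and therefore exhaust all of them. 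With the input restated in this correct form and that one lemma added, your proof goes through.
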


\begin{thm}
There is an infinite strongly rotarily transitive connected graph.
\end{thm}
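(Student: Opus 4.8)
The plan is to realize the desired graph as an orthogonality graph on a projective plane over a carefully chosen subfield $\field \subseteq \R$. Concretely, let $\field$ be the field of real algebraic numbers, a countable real closed subfield of $\R$ whose automorphism group is trivial. Equip $\field^3$ with the standard symmetric bilinear form $\langle x, y\rangle = x_1 y_1 + x_2 y_2 + x_3 y_3$, and let $G$ be the graph with vertex set $\field\mathbb{P}^2$ in which $[x]$ and $[y]$ are adjacent exactly when $\langle x, y\rangle = 0$. Since the form is anisotropic over a real closed field (a sum of three squares vanishes only at $0$), no vertex is adjacent to itself, so $G$ is a genuine simple graph; and since every projective line carries infinitely many points, $G$ is not locally finite, matching the abstract's claim.

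First I would record the easy structural facts. The neighbourhood of a vertex $[p]$ is exactly its polar line $p^\perp = \{[x] : \langle x, p\rangle = 0\}$, and polarity is a bijection between points and lines, so the neighbourhoods of $G$ are precisely the projective lines. For connectivity, given distinct $[p], [q]$ the distinct polar lines $p^\perp$ and $q^\perp$ meet in a unique point $[r]$, which is adjacent to both, so $G$ has diameter at most $2$. For transitivity, I would check that the projective orthogonal group $\mathsf{PO}_3(\field)$ acts transitively on $\field\mathbb{P}^2$: because $\field$ is real closed, every positive element (in particular every value $\langle v,v\rangle$) is a square, so any nonzero vector can be rescaled to unit norm and then carried to any other unit vector by an orthogonal transformation built by Gram--Schmidt.

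Next I would pin down the automorphisms. The key reduction is that any graph automorphism $\phi$ permutes neighbourhoods, hence permutes projective lines, and three points are collinear if and only if they share a common neighbour --- a purely graph-theoretic condition that $\phi$ preserves. Thus $\phi$ is a collinearity-preserving bijection of $\field\mathbb{P}^2$, so by the fundamental theorem of projective geometry it is induced by a semilinear map; as $\Aut(\field)$ is trivial, $\phi$ is in fact a projective-linear map $[A]$ with $A \in \GL_3(\field)$. Finally, because $\phi$ preserves orthogonality, $A$ preserves the conjugacy relation of the form, which forces $A^\top A$ to be a scalar matrix, i.e.\ $[A] \in \mathsf{PO}_3(\field)$. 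Hence $\Aut(G) = \mathsf{PO}_3(\field)$.

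It then remains to see that every element of $\mathsf{PO}_3(\field)$ fixes a vertex, which is where the choice of field pays off: lifting such an element to a matrix $A \in \mathsf{O}_3(\field) \subseteq \GL_3(\field)$, its characteristic polynomial has degree $3$, and since $\field$ is real closed every odd-degree polynomial over $\field$ has a root in $\field$. Thus $A$ has an eigenvalue $\lambda \in \field$, the singular matrix $A - \lambda I$ has a nonzero kernel vector $v \in \field^3$, and $[v] \in \field\mathbb{P}^2$ is a fixed point of $[A]$. Combined with transitivity, this shows $G$ is strongly rotarily transitive. I expect the main obstacle to be the automorphism computation: the reduction ``collinear $\iff$ common neighbour'' is what makes the graph rigid enough to feed into the fundamental theorem of projective geometry, and the two real-closedness inputs (positive elements are squares; odd-degree polynomials have roots) are exactly what make transitivity and the universal fixed-point property hold at the same time.
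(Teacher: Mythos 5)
Your proposal is correct and follows essentially the same route as the paper's own construction for the diameter-$2$ case: the orthogonality graph on $P(\field)$ for $\field = \overline{\Q}\cap\R$, with automorphisms shown to be collineations via polarity, pinned down by the fundamental theorem of projective geometry together with the triviality of $\Aut(\field)$, and given fixed points because cubics over $\field$ have roots. The only cosmetic difference is that you identify $\Aut(\mathcal{G})$ precisely as $\mathsf{PO}_3(\field)$, whereas the paper stops at the (sufficient) fact that every automorphism is induced by an element of $\GL(3,\field)$ and hence has an eigenvector.
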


Proposition~\ref{finitecase} is essentially classical, and stems from the Cauchy-Fro\-be\-nius Lemma. The proof of Theorem~\ref{nice} has a group-theoretic flavour: it involves some infinite finitely generated torsion groups constructed by Ivanov. The graph is obtained by letting such a group act on one of its conjugacy classes and building a compatible graph structure. As for Theorem~\ref{ugly}, its proof is geometric: we construct a suitable graph by taking as vertex-set the elements of the projective plane over some subfield of $\R$ and connecting two vertices by an edge if they are at distance exactly some suitable $\ell$.

For a detailed version of the current subsection (Section~\ref{straight}) and a proof of the easy Proposition~\ref{finitecase}, see Section~\ref{detail}. Theorem~\ref{nice} is established in Section~\ref{algebra}, while Theorem~\ref{ugly} is proved in Section~\ref{geometry}. This paper ends with several questions in Section~\ref{sec:quest}.

\subsection{Notation, definition and results}

\label{detail}

In this paper, a \defini{graph} is a pair $(V,E)$ where $V$ is a set and $E$ a subset of ${E \choose 2}$, where ${E \choose 2}$ denotes the set of all subsets of $E$  having exactly two elements. A graph is often denoted by $\mathcal{G}=(V,E)$. The elements of $V$ are called the \defini{vertices} of $\mathcal{G}$ and the elements of $E$ are the \defini{edges} of $\mathcal{G}$.

Two vertices of a graph are called \defini{neighbours} if they are distinct and there is an edge of the graph containing them both. ``Being neighbours'' defines a symmetric relation on the set of vertices of the considered graph.
A graph is said to be \defini{empty} if it has no vertex, \defini{finite} if it has finitely many vertices, \defini{infinite} if it has infinitely many vertices, and \defini{locally finite} if every vertex has finitely many neighbours.

A \defini{path (of length $n$)} is a finite sequence of vertices $(v_0,\dots,v_n)$ such that for any $i<n$, $\{v_i,v_{i+1}\}$ is an edge. (\footnote{The integer $n$ may be equal to 0.}) Its \defini{extremities} are $v_0$ and $v_n$. A path is said to \defini{connect} its extremities. A graph is \defini{connected} if any two vertices are connected by a path. The \defini{diameter} of a connected graph is the smallest $k\in \N\cup\{\infty\}$ such that for all vertices $u$ and $v$, there is a path of length at most $n$ that connects $u$ and $v$.
We will say that a graph is \defini{nice} if it is non-empty, connected, and locally finite. Nice graphs are a natural framework for discrete geometry \cite{benjaministflour}.

\saut

Now that the geometric vocabulary has been defined, let us deal with the notion of symmetry. An \defini{automorphism} of a graph $\mathcal{G}=(V,E)$ is a bijection $\varphi$ from $V$ to $V$ such that
$$
\forall u,v \in V,~\{u,v\}\in E \iff \{\varphi(u),\varphi(v)\}\in E.
$$
The automorphisms of $\mathcal{G}$ form a group, which we denote by $\Aut(\mathcal{G})$.
When we will speak of a \defini{group $G$ acting on a \emph{graph} $\mathcal{G}$}, we will mean a morphism from $G$ to $\Aut(\mathcal{G})$, which can be seen as a left group action of $G$ on $V$ by graph automorphisms.

We will only work with left group actions. Recall that an \defini{orbit} of a group action $G \acts X$ is an equivalence class for the equivalence relation on $X$ defined as follows:
$$
x\sim y \iff \exists g \in G,~g\cdot x=y.
$$
A group action $G \acts X$ is \defini{transitive} if
$$
\forall x,y \in X,~\exists g\in G,~ g\cdot x = y,
$$
that is to say if it has at most one orbit.
Given a group action $G \acts X$, the \defini{stabiliser} of an element $x$ of $X$ is $\Stab(x):=\{g\in G: g\cdot x = x\}$, which is a subgroup of $G$, and $x$ is called a \defini{fixed point} of $g$ if $g \in \Stab(x)$.
A group action $G\acts X$ is \defini{faithful} if $\bigcap_{x\in X} \mathsf{Stab}(x)=\{1\}$.
A group action is \defini{free} if for any $g\in G\backslash\{1\}$, $g$ has no fixed point.
Finally, we say that a group acts \defini{by rotations} if for every $g\in G$, $g$ has at least one fixed point.

\begin{rem}
As soon as $G$ is \defini{non-trivial}, i.e.\ as soon as it does not consist of a single element, no action of $G$ by rotations can be free. Actually, one can think ``being by rotations'' as a strong negation of the notion of freeness for actions of non-trivial groups.

Also notice that if a group $G$ acts by rotations on a set $X$, then the set $X$ cannot be empty: indeed, the identity element of $G$ must have a fixed point.
\end{rem}

We define a group action to be \defini{rotarily transitive} if it is transitive and by rotations.

\begin{exam}
The usual action of $\mathsf{SO}(3)$ on $S_2 := \{x\in \R^3:x_1^2+x_2^2+x_3^2=1\}$ is rotarily transitive.
\end{exam}

\begin{exam}
The usual action of $\mathsf{GL}(3,\R)$ on the set of the lines of $\R^3$ which contain 0 is rotarily transitive.
\end{exam}

\begin{exam}
\label{finitesupp}
Let $X$ be an infinite set. Let $G$ denote the group formed of the bijections $\sigma : X \to X$ of finite support. (\footnote{The support of $\sigma$ is $\{x\in X : \sigma(x)\not=x\}$.}) The obvious action of $G$ on $X$ is rotarily transitive.
\end{exam}

\saut

A graph $\mathcal{G}=(V,E)$ is said to be \defini{transitive} (or \defini{homogeneous}) if it can be endowed with a transitive group action, which is the same as asking for the usual action of $\Aut(\mathcal{G})$ on $V$ to be transitive. A graph is a \defini{Cayley graph} if it is nice and can be endowed with a free transitive group action. \begin{small}See \cite{sabid} for the equivalence between this definition and another common one.\end{small}

\begin{rem}
A Cayley graph \emph{may} have non-trivial automorphisms with a fixed point, as testifies the bi-infinite line. The group $\mathbb{Z}$ acts freely and transitively on it by translations, and flipping the line around some vertex defines a graph automorphism with a fixed point. Of course, such an automorphism cannot be ``used'' in any free transitive group action on this graph.
\end{rem}

We define a graph to be \defini{rotarily transitive} (or \defini{rotarily homogeneous}) if it can be endowed with a rotarily transitive group action. We say that a graph is \defini{strongly rotarily transitive} if the usual action of $\Aut(\mathcal{G})$ on $V$ is rotarily transitive --- which is the same as asking for the graph to be transitive and for every action on it to be by rotations.

\begin{rem}
Every strongly rotarily transitive graph is rotarily transitive, and every rotarily transitive graph is non-empty.
\end{rem}

\begin{exam}
We say that a \emph{graph} is \defini{trivial} if its set of vertices contains exactly one element. Trivial graphs obviously exist, and any trivial graph is strongly rotarily transitive and Cayley.
\end{exam}

\begin{small}
\begin{rem}
Conversely, if a graph $\mathcal{G}=(V,E)$ is strongly rotarily transitive and Cayley, then it is trivial. Indeed, if $G$ acts rotarily transitively and freely on $V$, then any element of $G\backslash\{1\}$ should have 0 and at least 1 fixed point, so that $G$ consists of a single element. If $G$ consists of a single element and acts rotarily transitively on a set, then this set must contain exactly one point.
\end{rem}
\end{small}

\setcounter{theorem}{-1}
\begin{family}
For $Y\in\{$``non-trivial finite'', ``nice non-trivial'', ``non-trivial''$\}$ and $Z\in \{\text{``''},\text{``strongly''}\}$, one can ask the following question.

\begin{center}
Is there a $Y$ $Z$ rotarily transitive graph?
\end{center}
\end{family}

We answer hereafter this question for all values of $(Y,Z)$ but (``nice non-trivial'', ``strongly'') --- which leads us to ask Question~\ref{questrong} in Section~\ref{sec:quest}.

\begin{prop}
There is no non-trivial finite rotarily transitive graph. \begin{small}In particular, there is no non-trivial finite strongly rotarily transitive graph.\end{small}
\end{prop}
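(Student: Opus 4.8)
The plan is to invoke the Cauchy--Frobenius Lemma, as flagged in the introduction. Let $\mathcal{G}=(V,E)$ be a finite rotarily transitive graph; I must show that $|V|=1$. By definition there is a group $G$ acting rotarily transitively on $V$, i.e.\ transitively and by rotations. The first step I would take is to reduce to the case where the acting group is \emph{finite}, since Cauchy--Frobenius is a statement about finite groups.

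For this reduction, I would use that $V$ is finite, so $\Aut(\mathcal{G})$ is a subgroup of the (finite) symmetric group on $V$, hence finite. The action of $G$ is, by the conventions of the excerpt, a morphism $\rho\colon G\to\Aut(\mathcal{G})$; set $H:=\rho(G)$, a finite subgroup of $\Aut(\mathcal{G})$ acting on $V$. Since $g\cdot x=\rho(g)\cdot x$ for all $g\in G$ and $x\in V$, the group $H$ has exactly the same orbits as $G$, so $H$ acts transitively; and every $h\in H$ equals $\rho(g)$ for some $g\in G$, so $\mathrm{Fix}(h)=\mathrm{Fix}(g)\neq\varnothing$, whence $H$ acts by rotations. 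Thus $H$ is a \emph{finite} group acting rotarily transitively on $V$.

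Now I would apply the Cauchy--Frobenius Lemma to $H\acts V$: the number of orbits equals $\frac{1}{|H|}\sum_{h\in H}|\mathrm{Fix}(h)|$. On the left, transitivity together with $V\neq\varnothing$ gives exactly one orbit. On the right, ``by rotations'' means $|\mathrm{Fix}(h)|\ge 1$ for every $h\in H$, so the average is at least $1$. Hence $1=\frac{1}{|H|}\sum_{h\in H}|\mathrm{Fix}(h)|\ge 1$, which forces equality, i.e.\ $|\mathrm{Fix}(h)|=1$ for every $h\in H$. Taking $h=1$ then yields $|V|=|\mathrm{Fix}(1)|=1$, so $\mathcal{G}$ is trivial. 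The ``in particular'' assertion is immediate, since every strongly rotarily transitive graph is rotarily transitive.

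The averaging computation itself is one line; the only point genuinely requiring care---and the step I would regard as the main obstacle---is the passage from an a priori arbitrary (possibly infinite) group $G$ to the finite quotient $H$, because the lemma is false without finiteness of the group. Once one checks that orbits and fixed-point sets are preserved when replacing $G$ by its image in $\Aut(\mathcal{G})$, everything else is routine.
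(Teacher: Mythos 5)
Your proposal is correct and matches the paper's argument: the paper likewise reduces to a finite group (via a footnote noting one may pass to a subgroup of $\mathfrak{S}_V$) and then applies the Cauchy--Frobenius Lemma, concluding from $F_g\geq 1$ for all $g$ and one orbit that $F_1=1$, i.e.\ $|V|=1$. No substantive difference.
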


\begin{thm}
There is a nice non-trivial rotarily transitive graph.
\end{thm}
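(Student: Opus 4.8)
The plan is to realise the graph as a conjugacy class of a suitable finitely generated infinite torsion group, with the group acting on it by conjugation. Concretely, I would invoke the groups constructed by Ivanov to obtain a finitely generated infinite group $G$ of finite exponent with only a small number of conjugacy classes; more precisely, I want a fixed element $c_0 \in G$ with the property $(\star)$ that every element of $G$ is conjugate to a power of $c_0$. I then set $C := \{h c_0 h^{-1} : h \in G\}$ and let $G$ act on the set $C$ by conjugation. The goal is to equip $C$ with a $G$-invariant graph structure that is connected and locally finite; the conjugation action will then realise $G$ as a group of automorphisms of this graph, and I will check that the resulting action is rotarily transitive.

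That the conjugation action on $C$ is transitive is immediate, since $C$ is a single conjugacy class. The point of $(\star)$ is to guarantee that the action is \emph{by rotations}. Indeed, let $g \in G$. By $(\star)$ there are $h \in G$ and an integer $j$ with $g = h c_0^{\,j} h^{-1}$. Since $c_0^{\,j}$ commutes with $c_0$, conjugating this relation by $h$ shows that $g$ commutes with $h c_0 h^{-1}$, which is an element of $C$; hence $g$ fixes the point $h c_0 h^{-1}$ of $C$. (The identity fixes every point, and $C \neq \emptyset$.) This is the step where the exotic nature of $G$ --- finite exponent together with very few conjugacy classes --- is genuinely used.

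It remains to build the graph. Writing $Z := \Stab(c_0) = \{h \in G : h c_0 = c_0 h\}$ for the centraliser of $c_0$, the map $h Z \mapsto h c_0 h^{-1}$ is a $G$-equivariant bijection from $G/Z$ (with $G$ acting by left translation) onto $C$ (with $G$ acting by conjugation), so it suffices to build a suitable left-$G$-invariant graph on $G/Z$. Because a nice non-trivial rotarily transitive graph must be infinite (there is no non-trivial finite one), I would choose $c_0$ with $[G:Z]=\infty$, which holds for Ivanov's groups since their non-central elements have centralisers of infinite index. Then $G$ is not covered by finitely many cosets of $Z$, so a finite symmetric generating set $S = S^{-1}$ of $G$ can be chosen disjoint from $Z$ (translate a given generating set by an element lying outside the relevant finitely many cosets). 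On $G/Z$ I put the Schreier coset graph: join $hZ$ and $hsZ$ for every $s \in S$. This graph is invariant under left translation (so conjugation acts by automorphisms), it is connected because $S$ generates $G$, it has no loop because $S \cap Z = \emptyset$, and every vertex has at most $|S|$ neighbours, so it is locally finite. Transporting it along the bijection above yields a nice graph on which $G$ acts by conjugation transitively and by rotations, as required.

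The main obstacle is entirely group-theoretic and upstream of the graph construction: securing a group $G$ with property $(\star)$ --- finitely generated, infinite, of finite exponent, with every element conjugate to a power of a single element $c_0$ whose centraliser has infinite index. This is exactly what Ivanov's machinery for infinite finitely generated groups of finite exponent with few conjugacy classes is meant to deliver, and verifying that such a group satisfies $(\star)$ (and that $[G:Z]=\infty$) is the crux of the argument. Once $G$ is granted, the remaining verifications --- equivariance of $G/Z \cong C$, and the invariance, connectivity and local finiteness of the Schreier graph --- are routine.
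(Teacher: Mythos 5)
Your strategy coincides with the paper's: take an Ivanov-type group $G$, let it act by conjugation on the conjugacy class $C$ of some $c_0\neq 1$, get transitivity for free, get rotations from the fact that every $g$ is conjugate to a power of $c_0$ and hence commutes with some element of $C$, and then put a $G$-invariant graph structure on $C\cong G/Z$ where $Z$ is the centraliser of $c_0$. The paper's edge set (the orbit of a single pair $(g_1, g_3g_1g_3^{-1})$ under diagonal conjugation) is exactly the one-double-coset version of your Schreier coset graph, and your fixed-point argument is the paper's. The property $(\star)$ that you correctly flag as the crux is what the paper extracts from Ivanov's theorem: in a group of exponent $p$ with exactly $p$ conjugacy classes in which every proper subgroup is cyclic of order $p$, distinct powers of any $g\neq 1$ lie in distinct conjugacy classes, so $\langle g\rangle$ meets every class exactly once.

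The genuine gap is in local finiteness. In a Schreier coset graph the neighbours of $hZ$ are the cosets $hzsZ$ with $z\in Z$ and $s\in S$, so their number is governed by how the double cosets $ZsZ$ decompose into left cosets of $Z$, not by $|S|$; the bound ``at most $|S|$ neighbours'' is the Cayley-graph count and fails in general when $Z$ is infinite (already for $G$ free on $a,b$ and $Z=\langle a\rangle$, the coset $Z$ has the infinitely many distinct neighbours $a^kbZ$). What rescues the construction --- and what the paper proves explicitly before concluding local finiteness --- is that the centraliser $Z$ of $c_0$ is \emph{finite}: it is not all of $G$ (else $C$ would be a singleton), it is nontrivial (it contains $c_0$), hence it is a proper subgroup of Ivanov's group and therefore isomorphic to $\Z/p\Z$. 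You only assert $[G:Z]=\infty$, which does not suffice. A smaller slip: translating a generating set by a group element need not produce a generating set, so your recipe for choosing $S$ disjoint from $Z$ does not work as stated; but that step is dispensable anyway, since with edges defined as two-element sets the pairs $\{hZ,hsZ\}$ with $s\in Z$ simply contribute no edge and connectivity is unaffected.
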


\begin{thm}
\label{ugly}
There is a non-trivial strongly rotarily transitive graph.
\begin{small}Actually, for every integer $k\geq 2$, there is an infinite strongly rotarily transitive connected graph with countably many vertices and diameter $k$.\end{small}
\end{thm}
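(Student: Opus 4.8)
The plan is to realise the desired graph geometrically inside a projective plane over a countable subfield $\field$ of $\R$, arranged so that its automorphisms are forced to be isometries, and so that every isometry of a projective plane has a fixed point.

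First I would fix a countable subfield $\field\subseteq\R$ rich enough for the rigidity step below (for instance the field of real algebraic numbers, or a countable field closed under the finitely many square-roots the construction requires), and let $\mathbb{P}:=\mathbb{P}^2(\field)$ be the set of $\field$-lines through the origin in $\field^3$. I equip $\mathbb{P}$ with the angular metric $d([u],[v])=\arccos\bigl(|\langle u,v\rangle|/(\|u\|\,\|v\|)\bigr)$, which takes values in $[0,\pi/2]$ and is the restriction to $\mathbb{P}$ of the round metric on $\RP$. For a threshold $\ell$ chosen in the last step, I define the graph $\mathcal{G}_\ell$ whose vertex set is $\mathbb{P}$ and whose edges join the pairs of points at distance exactly $\ell$. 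Since $\field$ is countable, $\mathcal{G}_\ell$ has countably many vertices.

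Next I would settle transitivity, connectedness and the diameter. The group $\mathsf{SO}(3,\field)$ of rotations with entries in $\field$ acts on $\mathbb{P}$ by isometries, hence by graph automorphisms; taking $\field$ large enough makes this action transitive on $\mathbb{P}$, which gives transitivity of $\mathcal{G}_\ell$. Connectedness and the diameter bound follow from a realisability lemma: any two $\field$-points at angular distance $d$ can be joined by a path of $\lceil d/\ell\rceil$ edges, provided $\field$ carries enough points at distance $\ell$ to step along an approximate geodesic. Since $d\leq\pi/2$ throughout and the orthogonal $\field$-points $[e_1],[e_2]$ realise the value $\pi/2$, the diameter equals $\lceil(\pi/2)/\ell\rceil$; choosing an admissible $\ell$ in $\bigl[\frac{\pi}{2k},\frac{\pi}{2(k-1)}\bigr)$ then yields diameter exactly $k$ (the lower bound because $(k-1)\ell<\pi/2$, so orthogonal points need at least $k$ hops).

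The heart of the matter, and the step I expect to be the main obstacle, is rigidity: showing that $\Aut(\mathcal{G}_\ell)$ consists \emph{only} of isometries. This is a projective, field-theoretic analogue of the Beckman--Quarles theorem. The idea is to exhibit, for the chosen $\ell$, finite rigid configurations of $\field$-points whose distance-$\ell$ incidences determine further distances, so that any bijection preserving the distance-$\ell$ relation in fact preserves the whole metric and hence extends to an isometry of $\RP$. Carrying this out over a fixed countable field, rather than over $\R$, is the delicate part, and it is precisely what dictates the arithmetic constraints on $\field$ and on $\ell$. Granting rigidity, every automorphism of $\mathcal{G}_\ell$ is induced by an element of $\mathsf{O}(3,\field)$ (the entries being recovered from the images of a $\field$-frame, all of which are $\field$-points); since $-\mathrm{id}$ acts trivially on $\mathbb{P}$ and $\mathsf{O}(3)=\mathsf{SO}(3)\times\{\pm\mathrm{id}\}$, it is induced by a unique rotation $R\in\mathsf{SO}(3,\field)$. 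Such an $R$ has $1$ as an eigenvalue, so $\det(R-\mathrm{id})=0$ and the system $(R-\mathrm{id})v=0$ has a nonzero solution in $\field^3$; the corresponding $\field$-line is a vertex fixed by the automorphism. Thus every automorphism of $\mathcal{G}_\ell$ has a fixed point, so $\mathcal{G}_\ell$ is strongly rotarily transitive, which proves the statement for every $k\geq 2$.
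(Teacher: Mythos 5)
Your overall strategy is the one the paper follows: take the projective plane over a suitable countable subfield $\field$ of $\R$ with the angular metric, form the distance-$\ell$ graph, prove a Beckman--Quarles-type rigidity statement so that every graph automorphism is an isometry, identify isometries with elements of $\mathsf{SO}(3)$ defined over $\field$, and extract a fixed point from the eigenvalue $1$. The problem is that the step you yourself flag as ``the heart of the matter'' is exactly where all the work lies, and you leave it at ``granting rigidity''. The paper's Proposition on $\ell$-isometries of $P(\field)$ is the substance of the whole section: one must choose $\ell\in(0,\pi/4)$ with $\cos\ell\in\field$ so that an equilateral triangle of side $\ell$ on $S_2$ has angle $\alpha\notin\pi\Q$; then the distances $\ell_n=d(y,r^n(y))$ (for $r$ a rotation by $\alpha$) are characterised purely in terms of the distance-$\ell$ relation by chains $p_0,\dots,p_n$ around a common centre $o$, the irrationality of $\alpha/\pi$ makes the $\ell_n$ accumulate at $0$, and an intermediate ``midpoint'' observation (using that $\field$ is closed under square roots of positive elements, via an explicit degree-$2$ equation) shows these chains actually exist over $\field$ and also yields connectedness and the diameter. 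The bound $\ell<\pi/4$ is needed so that the projection $S_2\to P(\field)$ is an isometry on balls of radius $\ell$, which is what lets the spherical configuration be transported to the projective plane. None of this is routine, and your ``finite rigid configurations'' sketch does not indicate how to produce them over a fixed countable field; so as written the proof has a genuine gap at its central step.

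A second, concrete failure point is the case $k=2$. Your recipe requires $\ell\in[\pi/4,\pi/2)$ to get diameter $2$, but that is precisely the range excluded by the rigidity argument above (and by any argument that transports small spherical configurations isometrically to $P(\field)$). The paper treats $k=2$ by an entirely different mechanism: it takes $\ell=\pi/2$, so that adjacency is orthogonality of lines; then automorphisms of the graph are collineations, the fundamental theorem of projective geometry (usable because a subfield of $\R$ closed under square roots of positive elements has no non-trivial field automorphisms) shows they come from $\mathsf{GL}(3,\field)$, and one chooses $\field$ (e.g.\ $\overline{\Q}\cap\R$) so that every cubic has a root, whence every element of $\mathsf{GL}(3,\field)$ has an eigenvector over $\field$. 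You would need either this separate argument or a rigidity statement valid for large $\ell$, and you have neither. (Minor points: your claim that $\mathsf{SO}(3,\field)$ acts transitively does hold once $\field$ is closed under square roots of positive elements, via Rodrigues' formula, but it should be justified; and the diameter computation also depends on the realisability of the stepping points over $\field$, which again is the content of the paper's midpoint observation.)
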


We end the current subsection with the proof of Proposition~\ref{finitecase} and a few comments. Section~\ref{algebra} is devoted to the proof of Theorem~\ref{nice}. The construction is algebraic and elementary, once one takes for granted a hard theorem due to Ivanov (Theorem~\ref{thm:ivanov}). Theorem~\ref{ugly} is established in Section~\ref{geometry}. The construction is geometric and elementary. We conclude this paper with several questions in Section~\ref{sec:quest}.

\subsubsection*{Proof of Proposition~\ref{finitecase}}

Recall the following easy combinatorial lemma. See e.g.~\cite{cauchyfrob}.

\begin{lem}[Cauchy-Frobenius]
Let $G$ be a finite group acting on a finite set $X$. For $g\in G$, denote by $F_g$ the number of fixed points of $g$. Then, the number of orbits of the considered action is equal to $\frac{1}{|G|}\sum_{g\in G} F_g$.
\end{lem}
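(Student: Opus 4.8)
The plan is to count the set of ``fixed pairs''
$$
S := \{(g,x)\in G\times X : g\cdot x = x\}
$$
in two different ways. On the one hand, summing over $g\in G$ first: by the very definition of $F_g$, each element $g$ contributes exactly $F_g$ pairs to $S$, so $|S| = \sum_{g\in G} F_g$. On the other hand, summing over $x\in X$ first: the elements $g$ satisfying $g\cdot x = x$ are precisely the elements of $\Stab(x)$, so $|S| = \sum_{x\in X} |\Stab(x)|$. Equating these two expressions is the backbone of the argument.

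The second step is to relate $|\Stab(x)|$ to the size of the orbit of $x$. This is where the orbit-stabiliser theorem enters: for a finite group action, the map sending a left coset $g\,\Stab(x)$ to $g\cdot x$ is a well-defined bijection from the set of left cosets of $\Stab(x)$ onto the orbit $O_x$ of $x$. Consequently $|G| = |\Stab(x)|\cdot|O_x|$, and rearranging yields $|\Stab(x)| = |G|/|O_x|$.

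Finally, I would regroup the sum $\sum_{x\in X}|\Stab(x)| = |G|\sum_{x\in X} 1/|O_x|$ according to the orbits of the action. Since the orbits partition $X$, and since every element $x$ lying in a given orbit $O$ satisfies $O_x = O$, each orbit contributes exactly $\sum_{x\in O} 1/|O| = 1$ to the sum. Hence $\sum_{x\in X} 1/|O_x|$ is precisely the number of orbits, and combining the two counts of $|S|$ gives $\sum_{g\in G} F_g = |G|\cdot(\text{number of orbits})$. Dividing by $|G|$ produces the claimed formula.

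The only genuinely nontrivial ingredient here is the orbit-stabiliser theorem; the remainder is a double count and the regrouping of a finite sum along a partition. I therefore expect the main (indeed, essentially the only) obstacle to be setting up that coset-to-orbit bijection cleanly. Since the statement is entirely classical and elementary, an alternative is simply to cite it, as the excerpt already does.
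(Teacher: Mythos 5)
Your double-counting argument is correct and complete: counting the fixed pairs $\{(g,x): g\cdot x=x\}$ both ways, invoking the orbit-stabiliser theorem to get $|\Stab(x)|=|G|/|O_x|$, and regrouping the sum over the orbit partition is exactly the classical proof of the Cauchy-Frobenius lemma. Note that the paper itself gives no proof at all --- it simply cites de Bruijn's note --- so your write-up supplies a standard argument the paper deliberately omits, and as you observe, citing the result would also have sufficed.
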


Proposition~\ref{finitecase} is a particular case\footnote{Indeed, if there is a group acting rotarily transitively on a finite graph $\mathcal{G}=(V,E)$, then there is such a finite group (which can be taken to be a subgroup of $\mathfrak{S}_V$).} of the following easy result, which is well-known \cite{jordan, serrejordan} --- even though usually phrased differently.

\setcounter{theorem}{3}
\begin{coro}[Jordan]
\label{finitenograph}
Let $G$ be a finite group acting rotarily transitively on a (necessarily finite) set $X$. Then, $X$ is a singleton.
\end{coro}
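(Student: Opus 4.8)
The plan is to apply the Cauchy-Frobenius Lemma and play its conclusion off against the ``by rotations'' hypothesis. Recall that ``rotarily transitive'' unpacks into two statements: the action is transitive, and every $g\in G$ satisfies $F_g\geq 1$. First I would record that $X$ is non-empty: the identity element has a fixed point because the action is by rotations, so $X$ contains at least that point. Combined with transitivity (at most one orbit), this shows that the action has \emph{exactly} one orbit.

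Next I would compute the orbit count two ways. On the one hand, it equals $1$ by the previous paragraph. On the other hand, the Cauchy-Frobenius Lemma expresses it as $\frac{1}{|G|}\sum_{g\in G}F_g$. Equating the two yields the key identity
$$
\sum_{g\in G} F_g = |G|.
$$
Now I would isolate the contribution of the identity, which fixes everything and hence satisfies $F_1 = |X|$, while using the lower bound $F_g\geq 1$ for the remaining $|G|-1$ elements:
$$
|G| = \sum_{g\in G} F_g = F_1 + \sum_{g\neq 1} F_g \geq |X| + (|G|-1).
$$
This forces $|X|\leq 1$, and since $X$ is non-empty we conclude $|X|=1$, i.e.\ $X$ is a singleton.

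There is no real obstacle here: the whole argument is just the observation that transitivity makes the average of the $F_g$ equal to $1$, whereas every $F_g$ is already at least $1$, so no slack is left for the identity to fix more than one point. The only point requiring a moment's care is the non-emptiness of $X$, which must be extracted from the ``by rotations'' hypothesis (via the identity element) so that the orbit count is genuinely $1$ and not $0$; once that is in place, the inequality above closes the proof immediately.
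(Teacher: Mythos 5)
Your proof is correct and follows essentially the same route as the paper: both apply the Cauchy--Frobenius Lemma to get $\frac{1}{|G|}\sum_g F_g = 1$, use $F_g \geq 1$ for all $g$, and conclude via the identity element that $|X| = 1$. The only cosmetic difference is that the paper deduces $F_g = 1$ for every $g$ before specialising to $g = 1$, whereas you isolate $F_1 = |X|$ in a single inequality; the content is identical.
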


\begin{proof}
The set $X$ is necessarily finite because the group $G$ is finite and acts transitively on it. Since $G$ acts by rotations on it, the set $X$ is not empty.
By the Cauchy-Frobenius Lemma, because the action under study is transitive and because $X\not= \varnothing$, we have, keeping the notation of the lemma,
$$
\frac{1}{|G|}\sum_{g\in G} F_g=1.
$$
Since the considered action is by rotations, we have $\forall g\in G,~F_g \geq 1$. These two observations imply that $\forall g\in G,~F_g = 1$. In particular, the identity element of $G$ has a unique fixed point, meaning that $X$ contains a unique element.
\end{proof}

\subsubsection*{Additional remarks}

The connectedness assumption appearing in the definition of ``nice'' is crucial as far as (non strong) rotary transitivity is concerned. Indeed, it is easy to build an infinite locally finite rotarily transitive graph, which suffices to answer the question associated with $(Y,Z)=(\text{``non-trivial''},\text{``''})$. Take $\mathcal{G}:=(X,\varnothing)$, where $X$ is a set endowed with a rotarily transitive action $G\acts X$. This group action is by graph automorphisms, since $\mathcal{G}$ has no edge. To build a graph as desired, it is thus enough to know rotarily transitive group actions on infinite sets (not necessarily on graphs). The usual action of $\mathsf{SO}(3)$ on $S_2$ or the ``finite support example'' of page~\pageref{finitesupp} do the trick. (\footnote{Likewise, the graph $\left( X, {X \choose 2}\right)$ is an easy example of an infinite connected rotarily transitive graph. It is not locally finite.}) Theorem~\ref{nice} is not as shallow.

As for strong rotary transitivity, the situation is different: assuming the Axiom of Choice, we show that every strongly rotarily transitive graph is connected. To do so, take $\mathcal{G}=(V,E)$ to be a strongly rotarily transitive graph. Since $\mathcal{G}$ is transitive, its connected components\footnote{A \defini{connected component} $V_0$ of $\mathcal{G}=(V,E)$ is an equivalence class for the equivalence relation ``being connected by a path''. It is endowed with a graph structure by taking the edge set to be $E\cap {V_0 \choose 2}$.} are pairwise isomorphic. Denote by $\mathcal{G}_0 =(V_0,E_0)$ the connected component of some vertex $v_0$ of $\mathcal{G}$. Such a component exists because the graph $\mathcal{G}$ is rotarily transitive hence non-empty. Let $\Cpts$ denote the set of the connected components of $\mathcal{G}$. Let $\mathcal{H}$ denote the graph with vertex-set $\Cpts \times V_0$ and such that there is an edge between $(c,v)$ and $(c',v')$ if and only if $c=c'$ and $\{v,v'\} \in E_0$. The Axiom of Choice implies that $\mathcal{G}$ and $\mathcal{H}$ are isomorphic. Now, assume for contradiction that $\mathcal{G}$ is not connected. Then, $\Cpts$ contains at least 2 elements. By using the Axiom of Choice, we can get\footnote{A \defini{pair} is a set containing exactly 2 elements. By Zorn's Lemma, one can find a set $S$ of disjoint pairs of elements of $\Cpts$ that is maximal with this property. This means that at most one \emph{element} of $\Cpts$ does not belong to an element of $S$. If there is no such \emph{element}, define $\sigma$ by mapping an element of $\Cpts$ to the other element of the pair in $S$ containing it. Otherwise, since $\Cpts$ is not a singleton, one can choose a pair belonging to $S$ and define $\sigma$ as before on the elements of the other pairs belonging to $S$ and by cyclically permuting the remaining three elements of $\Cpts$.} a bijection $\sigma : \Cpts \to \Cpts$ without any fixed point. The map $(c,v)\mapsto (\sigma(c),v)$ is a graph automorphism of $\mathcal{H}$, and it has no fixed point. Thus $\mathcal{H}$, hence $\mathcal{G}$, cannot be strongly rotarily transitive: the claim is established. 

\saut

Another remark is that a non-trivial bipartite connected graph cannot be rotarily transitive. \begin{small}Recall that a graph is \defini{bipartite} if there is a way to colour its vertices black or white so that there is no monochromatic edge. More formally, a graph $\mathcal{G}=(V,E)$ is bipartite if there is a function $f:V\to \{0,1\}$ such that $\forall e\in E,~f(e)=\{0,1\}$. The assertion is proved as follows. Let $\mathcal{G}=(V,E)$ be a non-trivial bipartite connected graph. Assume that $\mathcal{G}=(V,E)$ is rotarily transitive, hence non-empty. Let $f:V\to \{0,1\}$ be such that $\forall e\in E,~f(e)=\{0,1\}$. Since $\mathcal{G}$ is connected and has at least two vertices, it must have at least one edge. Denote by $\{v_0,v_1\}$ some edge of $\mathcal{G}$.
Let $G\acts \mathcal{G}$ be a rotarily transitive group action, and let $g\in G$ be such that $g\cdot v_0=v_1$. Since $\mathcal{G}$ is connected and $f(v_0)=1-f(v_1)$, for every $v\in V$, we have $f(g\cdot v)=1-f(v)$. As a result, $g$ has no fixed point, contradicting the rotary transitivity of $G\acts \mathcal{G}$ and establishing the assertion.

In particular, non-trivial trees cannot be rotarily transitive. Recall that a \defini{tree} is a connected graph $\mathcal{G}=(V,E)$ such that for every $n\geq 3$, there is no injective map $\phi$ from $Z/n\Z$ to $V$ such that for every $i\in \Z/n\Z$, $\phi(i)$ and $\phi(i+1)$ are neighbours.\end{small}

\section{Group theory gives a nice weak example of a rotarily transitive graph}

\label{algebra}

In order to establish Theorem~\ref{nice}, we will need Theorem~\ref{thm:ivanov}. For the sake of completeness, let us a recall the relevant terminology.

Recall that a group $G$ acts on itself by \defini{conjugation} via $g\cdot h = ghg^{-1}$. In this section, this ``dot notation'' will always refer to that precise action (possibly restricted to a smaller set than the whole group). The orbits of $G$ for this action are called the \defini{conjugacy classes}, and the stabiliser of an element $g\in G$ is called its \defini{centraliser}. \begin{small}Notice that for $g,h\in G$, the element $g$ belongs to the centraliser of $h$ if and only if $gh=hg$, which in turn is equivalent to $h$ belonging to the centraliser of $g$.\end{small}

Finally, a \defini{proper} subgroup $G$ is a subgroup of $G$ which is neither \emph{equal} to $\{1\}$ nor to $G$.

\begin{thm}[Ivanov]
\label{thm:ivanov}
For any prime $p$ large enough, there is a group $G$ which satisfies the following conditions:
\begin{enumerate}
\item every proper subgroup of $G$ is isomorphic to the cyclic group $\Z/p\Z$, \label{monster}
\item $G$ has exactly $p$ conjugacy classes, \label{nbconj}
\item and $G$ is not isomorphic $\Z/p\Z$. \label{nontriv}
\end{enumerate}
\end{thm}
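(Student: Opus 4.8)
The plan is to realize $G$ as a \emph{Tarski-monster}-type group: an infinite, finitely generated group in which every proper nontrivial subgroup is forced to be cyclic of order $p$, together with the extra global constraint that only $p$ conjugacy classes survive. I would not expect an elementary argument here; the natural route is the geometric small-cancellation machinery of Ol'shanskii (graded van Kampen diagrams), which Ivanov refines. The construction I would attempt proceeds by fixing a large prime $p$ and realizing $G$ as the direct limit of a sequence of quotients $G_0 = F \twoheadrightarrow G_1 \twoheadrightarrow G_2 \twoheadrightarrow \cdots$, starting from a free group $F$ on two generators (or a suitable free Burnside-type quotient, so that exponent $p$ comes for free).

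At stage $i$ one adjoins finitely many relations designed to destroy a single ``obstruction'': either a two-generator subgroup $\langle a,b\rangle$ that is not yet cyclic of order $p$, which one collapses into such a subgroup, or a pair of elements that one forces to become conjugate. Controlling only two-generator subgroups is enough, since if every proper $2$-generated subgroup is cyclic of order $p$, then any proper subgroup $H$ is already cyclic of order $p$: picking $a\in H\setminus\{1\}$, every $b\in H$ lies in the proper subgroup $\langle a,b\rangle\cong\Z/p\Z$, hence in $\langle a\rangle$. Enumerating all obstructions and treating them one at a time should yield, in the limit, a group satisfying condition~\ref{monster}, with the nonidentity elements merged into $p-1$ conjugacy classes, giving condition~\ref{nbconj}.

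The whole difficulty is to run this iteration \emph{without collapsing} $G$ to a finite group (in particular to $\Z/p\Z$) or to the trivial group, and this is exactly what the theory of graded diagrams is for. Each adjoined relator is chosen long and sufficiently aperiodic that, relative to the relators introduced at earlier stages, it satisfies a graded small-cancellation condition, so that reduced van Kampen diagrams over the accumulated presentation contain no cell sharing a large portion of its boundary with its neighbours. A Greendlinger-type lemma then shows that no short word becomes trivial, so the emerging Cayley graph has growing diameter and the limit group $G$ is infinite; in particular $G\not\cong\Z/p\Z$, which is condition~\ref{nontriv}. The same control certifies that the subgroups one has not explicitly killed really are the intended copies of $\Z/p\Z$, and that the imposed conjugacy relations leave exactly $p$ classes.

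The main obstacle is the bookkeeping that enforces conditions~\ref{monster} and~\ref{nbconj} \emph{simultaneously} while preserving infiniteness. Every relation added to merge two conjugacy classes risks manufacturing new non-cyclic subgroups, and every relation added to cut a subgroup down risks splitting conjugacy classes; interleaving the two enumerations so that neither undoes the other, and then proving that the \emph{entire} accumulated set of relators still satisfies the graded small-cancellation hypotheses, is the genuinely hard technical core. This is precisely why the present paper takes the result for granted and cites Ivanov rather than reproving it.
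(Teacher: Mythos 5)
First, a point of calibration: the paper does not prove this statement at all. It is imported as Ivanov's theorem, with the reader referred to the last two pages of Ol'shanskii's book \cite{olshdefining}. So there is no internal argument to compare yours against; the only question is whether your sketch could stand on its own as a proof.

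It cannot, and you essentially say so yourself in your last paragraph. What you have written is an accurate roadmap of the Ol'shanskii--Ivanov construction: iterated quotients of a rank-$2$ free (or free Burnside) group, alternately collapsing non-cyclic $2$-generated proper subgroups and fusing conjugacy classes, with a graded small-cancellation condition guaranteeing that the limit does not collapse. Your reduction from arbitrary proper subgroups to $2$-generated ones is correct and worth keeping: with the paper's convention that a proper subgroup is by definition nontrivial, one picks $a\in H\setminus\{1\}$, notes that $\langle a,b\rangle\leq H$ is proper for every $b\in H$, hence cyclic of order $p$ and therefore equal to $\langle a\rangle$, so $H=\langle a\rangle$. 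But every step that carries actual logical weight is deferred: the definition of the graded condition, the verification that each newly adjoined relator satisfies it relative to all earlier ones, the Greendlinger-type lemma for graded van Kampen diagrams that yields infiniteness, and --- the point you correctly flag as the crux --- the proof that the fusion relations do not manufacture new non-cyclic proper subgroups and vice versa. That interleaving \emph{is} the theorem, and it occupies a substantial portion of \cite{olshdefining}. As an expanded justification for citing the result, your paragraph is reasonable; as a proof, the gap is coextensive with the statement itself.
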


For this theorem, refer to \cite{olshdefining}, and in particular to the last two pages of the book.

Let $G$ be a group satisfying the conditions of Theorem~\ref{thm:ivanov}. By Condition~\ref{nbconj} and because $p$ is prime, if $G$ is abelian, then it is isomorphic to $\Z/p\Z$. Thus, by Condition~\ref{nontriv}, $G$ cannot be abelian. It is thus possible to take $g$ in $G\backslash\{1\}$, and $h$ to belong to $G$ but not to the subgroup generated by $g$. The subgroup of $G$ generated by $g$ and $h$ admits a proper subgroup: since $\Z/p\Z$ does not, by Condition~\ref{monster}, the group generated by $g$ and $h$ needs to be $G$.

By Condition~\ref{monster} \emph{and} because $G$ is not abelian, every element of $G\backslash \{1\}$ has order $p$.
It is also well-known that $G$ is necessarily infinite. This can be seen as a consequence of Corollary~\ref{finitenograph} \emph{and} the \emph{fact} that for any $g\in G\backslash \{1\}$, the action of $G$ by conjugation on the conjugacy class of $g$ --- which contains at least 2 elements --- is rotarily transitive. See the proof of Theorem~\ref{nice} for a full justification of the \emph{fact}.
Therefore, any group as in Theorem~\ref{thm:ivanov} is a solution to the Burnside Problem. (\footnote{Say that a group $G$ has \defini{finite exponent} if there is a positive integer $n$ such that $\forall g\in G,~g^n=1$. Recall that a group is \defini{finitely generated} if it admits a finite subset generating it as a group. The \defini{Burnside Problem} is the following question: does there exist an infinite finitely generated group of finite exponent? This question was answered in the affirmative in \cite{anov1, anov2, anov3}.}) \begin{small}
\end{small}

Given a group $G_0$ such that $\forall g\in G_0,~g^p=1$, it is also known that for any $g\in G_0$ and $m,n\in \Z$, if $g^m$ and $g^n$ belong to the same conjugacy class, then they are equal. To prove this, take $g,h\in G_0$ and $m,n\in \Z$ such that $g^m=h\cdot g^n$ and $g^m\not=g^n$. As $h\cdot 1=1$, we have $g^n\not=1$. The map $\sigma : g'\mapsto h\cdot g'$ is well-defined from $\langle g \rangle$ --- the subgroup of $G_0$ generated by $g$ --- to itself because, as $g$ is of prime order $p$, the element $g^n\not=1$ generates $\langle g\rangle$. Since $\sigma$ is not the identity map and because $h$ has order $p$, this map is a permutation of $\langle g \rangle$ of order $p$. As $\langle g \rangle$ has cardinality $p$, the permutation $\sigma$ must be a $p$-cycle without any fixed point. This contradicts $\sigma(1)=1$.

As a result, any non-abelian group $G_0$ satisfying Condition~\ref{monster} has at least $p$ conjugacy classes: indeed, if $G_0$ is such a group, then $G_0$ is non-trivial and every element of $G_0\backslash\{1\}$ has order $p$. 
Likewise, if a group $G$ satisfies the conditions of Theorem~\ref{thm:ivanov}, then every proper subgroup of $G$ intersects every conjugacy class exactly once.

\saut

\begin{proofnice}
Let $G$ be a group satisfying the conditions of Theorem~\ref{thm:ivanov}. We have seen that $G$ can be generated by two distinct elements: let us denote such a generating pair by $(g_1,g_3)$, and set $g_2 := g_3\cdot g_1$.

Let $V$ be the conjugacy class of $g_1$. Since $V$ is an orbit of $G$ for the action by conjugation, $G$ \emph{acts on} $V$ by conjugation, and this action is \emph{transitive}.
Define two elements $h$ and $h'$ of $V$ to be connected by an edge if there is some $g\in G$ such that $g\cdot h=g_i$ and $g\cdot h'=g_j$, with $\{i,j\}=\{1,2\}$.
The graph under consideration --- $\mathcal{G}$ --- is \emph{neither empty nor trivial}, i.e.\ $V$ contains at least two elements. Indeed, $V$ contains $g_1$ and if $V$ was a singleton, then $g_1$ would commute with $g_3$, implying that $G$ is abelian. But, we have seen that $G$ cannot be abelian.

Notice that $G$ acts on $V$ \emph{by graph-automorphisms}.
\begin{small}
Since the group $G$ acts on the set $V$, it is enough to show that for any $(h,h')\in V^2$ and $g\in G$, if $h$ and $h'$ are connected by an edge, then $g\cdot h$ and $g\cdot h'$ are connected by an edge. Let thus $(h,h')\in V^2$ be such that $h$ and $h'$ are connected by an edge, and let $g\in G$. Since $h$ and $h'$ are connected by an edge, we can take $g'$ such that $g'\cdot h=g_i$ and $g'\cdot h'=g_j$, with $\{i,j\}=\{1,2\}$. Take $g''$ to be $g'g^{-1}$. We have $g''\cdot(g\cdot h)=g''g\cdot h=g'g^{-1}g\cdot h = g'\cdot h = g_i$. Likewise, $g'' \cdot (g\cdot h')=g_j$. Hence, $g\cdot h$ and $g\cdot h'$ are connected by an edge.
\end{small}

In order to show that the group action $G\acts V$ is \emph{rotarily transitive}, it remains to show that every element of $g\in G$ has at least one fixed point in $V$. Let $g\in G$. If $g=1$, then $g$ has a fixed point, since $V$ is non-empty: let us thus assume that $g\not= 1$. We have seen that, necessarily, $g$ generates a group intersecting every conjugacy class exactly once. Thus, there is some integer $k$ such that $g^k\in V$. Fixing such a $k$, we have $g\cdot g^k=gg^kg^{-1}=g^{1+k-1}=g^k$. As a result, $g$ has a fixed point in $V$.

Let us prove that the considered graph is \emph{connected}. Note that for any $g\in G$ and $k\in \Z$, there is an edge between $g\cdot g_1$ and $gg_1^kg_3\cdot g_1$ --- indeed, $g_1^{-k}g^{-1}\cdot(g\cdot g_1)=g_1^{-k}\cdot g_1=g_1$ and $g_1^{-k}g^{-1}\cdot(gg_1^kg_3\cdot g_1)=g_3 \cdot g_1=g_2$. From this, it results (by induction) that $g_1$ is connected by a path to any $(g_1^{k_1}g_3g_1^{k_2}g_3\dots g_1^{k_n}g_3)\cdot g_1$. Because the $k_i$'s can be taken equal to 0 and $g_3$ has finite order, $g_1$ can be connected by a path to any $g\cdot g_1$, where $g$ belongs to the subgroup generated by $g_1$ and $g_3$. Since $\{g_1,g_3\}$ generates $G$ and $G$ acts transitively on $V$, the graph is connected.

It remains to show that the considered graph is \emph{locally finite}. 
First, notice that the centraliser of $g_1$ is finite. Indeed, were it not the case, the centraliser of $g_1$ would be infinite, hence $G$ by Condition~\ref{monster}. As a result, $V$ would be equal to $\{g_1\}$. But we have seen that $V$ contains at least two elements: the centraliser of $g_1$ must thus be finite. Since $g\cdot g_2=g_2 \iff gg_3\cdot g_1 =g_3\cdot g_1\iff g_3^{-1}gg_3\cdot g_1=g_1$, the centraliser $\mathsf{Cent}(g_2)$ of $g_2$ is equal to $g_3\mathsf{Cent}(g_1)g_3^{-1}$. Consequently, the centraliser of $g_2$ is also finite.

Because $G$ acts transitively on $\mathcal{G}$, establishing the local finiteness of $\mathcal{G}$ is reduced to proving that
\begin{enumerate}
\item there are only finitely many $h\in V$ such that there is some $g\in G$ such that $g \cdot g_1=g_1$ and $g \cdot h=g_2$
\item and there are only finitely many $h\in V$ such that there is some $g\in G$ such that $g\cdot g_2=g_2$ and $g \cdot h=g_1$.
\end{enumerate}
But these points result from the finiteness of the centralisers of $g_1$ and $g_2$.
Theorem~\ref{nice} follows.
\end{proofnice}

\begin{rem}
Actually, the construction presented in this proof shows that there is a nice infinite graph that can be endowed with a transitive group action $G\acts \mathcal{G}$ such that
every element of $G\backslash \{1\}$ has exactly one fixed point\footnote{Indeed, we have seen that the centraliser of $g_1$ is finite. Since we could have taken $g_1$ to be any element of $G\backslash\{1\}$ --- and $g_3$ to be any element of $G\backslash \langle g_1\rangle$ ---, every element of $G\backslash\{1\}$ has a finite centraliser. For $g\in G\backslash\{1\}$, the centraliser of $g$ is thus a proper subgroup of $G$, as it is finite and contains $g$. We have seen that such a subgroup of $G$ must intersect $V$ exactly once.}
and every vertex has a finite stabiliser for the group action $G\acts \mathcal{G}$.
\end{rem}

\section{Geometry provides wild examples of strongly rotarily transitive graphs}

\label{geometry}

Given a metric space $(X,d)$ and a real number $\ell$, we say that a mapping $\varphi : X \to X$ is an \defini{$\ell$-isometry} if it is bijective and satisfies
$$
\forall x,y\in X,~d(x,y)=\ell \iff d(\varphi(x),\varphi(y))=\ell.
$$
An \defini{isometry} of $(X,d)$ is a bijection from $X$ to $X$ that is an $\ell$-isometry for every $\ell$.

Beckman and Quarles proved in \cite{beckmanquarles} that for any $\ell > 0$, every $\ell$-isometry of the Euclidean plane is necessarily an isometry. A similar result holds for the sphere: see Theorem~\ref{thmeverling}.

Let $N\geq 2$. Let $S_N:= \left\{x\in \R^{N+1} : \sum_{i=1}^{N+1} x_i^2=1\right\}$. For any two elements $x$ and $y$ of $S_N$, define $d_N(x,y) \in [0,\pi/2]$ by
$\cos d_N(x,y) =  \langle x | y \rangle$, where $\langle \phantom{x} | \phantom{x} \rangle$ denotes the usual scalar product on $\R^N$. The function $d_N$ is well and univocally defined, and $(S_N,d_N)$ is a metric space. \begin{small}See 2.2 and 2.3~(1) in \cite{bridsonhaefliger}.\end{small}

\begin{thm}[Everling, \cite{isomsphere}]
\label{thmeverling}
For every $N\geq 2$ and $\ell \in (0,\pi/2)$, every $\ell$-isometry of $(S_N,d_N)$ is an isometry.
\end{thm}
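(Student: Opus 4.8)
The plan is to recover the entire metric $d_N$ from the single relation ``being at distance $\ell$'', by expressing other distances through finite configurations whose shape is forced by the $\ell$-relation alone. The starting observation is that an $\ell$-isometry $\varphi$ is a bijection whose inverse is again an $\ell$-isometry: if $d_N(u,v)=\ell$ and we set $a:=\varphi^{-1}(u)$, $b:=\varphi^{-1}(v)$, then the equivalence $d_N(a,b)=\ell \iff d_N(u,v)=\ell$ forces $d_N(a,b)=\ell$. Hence, for any two points $x,y$, writing $C(x,y):=\{z\in S_N : d_N(x,z)=d_N(y,z)=\ell\}$ for their set of common $\ell$-neighbours, $\varphi$ restricts to a bijection from $C(x,y)$ onto $C(\varphi(x),\varphi(y))$; in particular $\varphi$ preserves the cardinality of every such set.

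The key lemma is a combinatorial characterisation of the distance $2\ell$ in terms of $C(x,y)$. A point $z$ lies in $C(x,y)$ exactly when $\langle x,z\rangle=\langle y,z\rangle=\cos\ell$, so $C(x,y)$ is the intersection of $S_N$ with an affine subspace of codimension $2$; computing the distance $\rho$ from the origin to that subspace gives $\rho^2=2\cos^2\ell/(1+\cos d)$, where $d:=d_N(x,y)$. Therefore, as long as $2\ell\le \pi/2$,
$$
|C(x,y)| = 0 \iff d>2\ell, \qquad |C(x,y)| = 1 \iff d=2\ell, \qquad |C(x,y)|\ge 2 \iff d<2\ell,
$$
the middle case being the tangency $\rho=1$. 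Since $\varphi$ preserves $|C(x,y)|$, it preserves the relation ``$d_N(x,y)=2\ell$'', so the distance $2\ell$ is preserved.

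From here I would bootstrap. Preserving $\ell$ and $2\ell$ simultaneously, $\varphi$ preserves the relation ``$z$ is the $\ell$-midpoint of $x$ and $y$'', that is $d_N(x,z)=d_N(z,y)=\ell$ together with $d_N(x,y)=2\ell$; chaining such midpoints along geodesics, and intersecting geodesics in general position, one preserves the distances of an ever finer $\ell$-grid and, by combining configurations, a dense subset $D\subset[0,\pi/2]$ of preserved distances. The emptiness/tangency dichotomy above also lets $\varphi$ detect the order relation ``$d_N(x,y)\le 2\ell$ versus $>2\ell$'' and, after translating configurations, comparisons against each element of $D$; these inequalities sandwich every distance between preserved ones, forcing $\varphi$ to preserve \emph{all} distances, whence $\varphi$ is an isometry.

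I expect two obstacles to carry the real weight. The first is the range of $\ell$: the tangency characterisation of $2\ell$ needs $2\ell\le\pi/2$, i.e.\ $\ell\le\pi/4$, whereas the theorem asks for every $\ell\in(0,\pi/2)$; for larger $\ell$ the value $2\ell$ exceeds the diameter and no pair realises it, so one must instead isolate a suitable \emph{smaller} rigid distance (for instance the extremal separation of two common $\ell$-neighbours of a single point) and bootstrap downward. The second, and genuinely delicate, obstacle is the passage from a dense set of preserved distances to all distances \emph{without} any continuity hypothesis on $\varphi$: one may not pass to limits directly, and the conclusion must be extracted from the order-preservation built into the rigidity lemmas. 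This is precisely the step where a Beckman--Quarles-type argument substitutes combinatorial order-preservation for topological closure.
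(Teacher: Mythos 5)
First, a point of comparison: the paper does not prove this statement at all --- Theorem~\ref{thmeverling} is quoted from Everling \cite{isomsphere} --- so there is no internal proof to measure yours against; the closest relative in the paper is the argument for Proposition~\ref{projbq}. Judged on its own, your proposal starts correctly: that $\varphi^{-1}$ is again an $\ell$-isometry, and the trichotomy $|C(x,y)|=0$, $=1$, $\geq 2$ according to whether $d_N(x,y)$ exceeds, equals, or is less than $2\ell$, are both right (the distance from the origin to the codimension-two affine subspace is indeed $\rho$ with $\rho^2=2\cos^2\ell/(1+\cos d)$, and $\rho\leq 1\iff d\leq 2\ell$). Moreover your ``first obstacle'' is illusory: the diameter of $(S_N,d_N)$ is $\pi$, not $\pi/2$ (the codomain ``$[0,\pi/2]$'' in the paper's definition of $d_N$ is a slip, since $\langle x\,|\,y\rangle$ can be negative; compare the cited passage of Bridson--Haefliger), so the tangency characterisation of $2\ell$ works for every $\ell\in(0,\pi/2)$ because $2\ell<\pi$.

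The genuine gap is the step you compress into ``chaining midpoints \dots\ intersecting geodesics in general position \dots\ a dense subset $D$ of preserved distances''. Nothing in the proposal produces a single preserved distance smaller than $\ell$, let alone a dense set of them, and that is where all the work of the theorem lies. Iterating the common-neighbour construction yields the multiples $k\ell$ and the relations $d\leq k\ell$, a discrete family; to refine the grid one needs rigid $\ell$-configurations whose derived distances are incommensurable with $\ell$. This is precisely the mechanism of the paper's proof of Proposition~\ref{projbq}, where the small preserved distances $\ell_n=d(y,r^n(y))$ come from the rotation by the angle $\alpha$ of an equilateral triangle of side $\ell$, and their density near $0$ is exactly the extra hypothesis $\alpha\notin\pi\Q$ --- a hypothesis that is not available for an arbitrary $\ell\in(0,\pi/2)$ in Everling's theorem, and which does fail for some $\ell$. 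Your closing step (preserving the order relations $d\leq t$ for $t$ in a dense set forces preservation of $d$ itself, with no continuity assumption) is sound and is the same sandwiching device the paper uses at the end of that proof; but without an argument that the dense set exists for \emph{every} $\ell$ in the stated range, the proof does not close. As written, the proposal establishes only that $\varphi$ preserves the distances $k\ell$ and the relations $d\leq k\ell$.
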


We also recall the following well-known proposition.

\begin{prop}
\label{proportho}
Every isometry of $(S_N,d_N)$ is induced by an element of the orthogonal group $\mathsf{O}(N+1)$.
\end{prop}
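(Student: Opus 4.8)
The plan is to show that any isometry $\varphi$ of $(S_N,d_N)$ preserves the standard inner product on $\R^{N+1}$, and then to recognise $\varphi$ as the restriction to $S_N$ of a genuine orthogonal linear map. The first step is immediate from the definition of $d_N$: since $\cos$ is strictly decreasing, hence injective, on the interval $[0,\pi/2]$ to which all values of $d_N$ belong, the number $d_N(x,y)$ determines $\cos d_N(x,y)=\langle x | y\rangle$. Because $\varphi$ is an isometry, $d_N(\varphi(x),\varphi(y))=d_N(x,y)$ for all $x,y\in S_N$, and applying $\cos$ to both sides yields $\langle \varphi(x) | \varphi(y)\rangle = \langle x | y\rangle$ for all $x,y\in S_N$. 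Thus $\varphi$ preserves the inner product of any two unit vectors.

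Next I would fix the standard orthonormal basis $e_1,\dots,e_{N+1}$ of $\R^{N+1}$, each $e_i$ lying on $S_N$. By the inner-product-preserving property, $\langle \varphi(e_i) | \varphi(e_j)\rangle = \langle e_i | e_j\rangle = \delta_{ij}$, so the family $\left(\varphi(e_i)\right)_{1\leq i\leq N+1}$ is again an orthonormal basis of $\R^{N+1}$. Let $A$ be the unique linear endomorphism of $\R^{N+1}$ determined by $A e_i = \varphi(e_i)$ for every $i$. Since $A$ sends an orthonormal basis to an orthonormal basis, it is an orthogonal transformation, that is, $A\in \mathsf{O}(N+1)$; in particular $A$ maps $S_N$ into $S_N$.

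It remains to check that $A$ and $\varphi$ agree on $S_N$, so that $\varphi$ is indeed induced by $A$. For $x\in S_N$, write $x=\sum_i \langle x | e_i\rangle\, e_i$ in the orthonormal basis $(e_i)$. The coordinates of $\varphi(x)$ in the orthonormal basis $\left(\varphi(e_i)\right)$ are $\langle \varphi(x) | \varphi(e_i)\rangle = \langle x | e_i\rangle$, which are precisely the coordinates of $x$ in $(e_i)$. Hence $\varphi(x)=\sum_i \langle x | e_i\rangle\, \varphi(e_i)=\sum_i \langle x | e_i\rangle\, A e_i = A x$, so $\varphi = A|_{S_N}$ with $A\in \mathsf{O}(N+1)$, as claimed.

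The only delicate point is the very first step, the recovery of the inner product from the metric, and it rests entirely on the injectivity of $\cos$ on the range of $d_N$; this is exactly why it matters that the values of $d_N$ are constrained to $[0,\pi/2]$. I expect this to be where a subtlety could hide: if the metric determined only $|\langle x | y\rangle|$ rather than $\langle x | y\rangle$ itself, one would additionally have to fix a consistent global sign before defining $A$, which one could do using the continuity of $\varphi$ (an isometry is in particular continuous) together with the connectedness of $S_N$. Under the definition given here, however, no such ambiguity arises, and the linear-algebra argument above carries through directly.
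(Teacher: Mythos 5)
Your proof is correct and complete. Note that the paper does not actually prove Proposition~\ref{proportho}: it states it as well known and defers entirely to the cited passages of Berger and of Bridson--Haefliger. Your argument --- recover $\langle x\,|\,y\rangle$ from $d_N(x,y)$ via the injectivity of $\cos$, observe that $\varphi$ sends the standard orthonormal basis to an orthonormal basis, extend linearly to some $A\in\mathsf{O}(N+1)$, and verify $\varphi=A|_{S_N}$ by comparing coordinates in the two bases --- is exactly the standard proof that those references contain, so you have simply made the paper self-contained at this point. One remark on the step you flag as delicate: the paper's stated range $d_N(x,y)\in[0,\pi/2]$ is presumably a slip (for antipodal unit vectors one has $\langle x\,|\,y\rangle=-1$, which no $d\in[0,\pi/2]$ realises; the intended range is $[0,\pi]$, as in the cited sources), but this does not affect your argument, since $\cos$ is injective on all of $[0,\pi]$ and the metric therefore still determines $\langle x\,|\,y\rangle$ itself rather than only its absolute value, so no sign-fixing via connectedness is needed.
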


\begin{small}For a proof, see 8.1.5, 9.7.1 and \underline{18.5.2} in \cite{bergeom1, bergeom2} or 4.3, 4.12 (1) and \underline{4.14} in \cite{bridsonhaefliger}.\end{small}

\vspace{0.2cm}

Combining Theorem~\ref{thmeverling} and Proposition~\ref{proportho} almost gives us an infinite strongly rotarily transitive graph. Indeed, for any metric space $(X,d)$ and real number $\ell$, one can define a graph $\mathcal{G}(X,d,\ell)$ by taking $X$ to be its vertex-set and connecting $x$ and $y$ by an edge if and only if $d(x,y)=\ell$. For every $\ell\in (0,\pi/2)$, the graph $\mathcal{G}(S_2,d_2,\ell)$ is infinite, transitive, and it admits exactly one automorphism without fixed point, namely $x\mapsto -x$, as given by Theorem~\ref{thmeverling}, Proposition~\ref{proportho} and the classification of the elements of $\mathsf{O}(3)$.

\vspace{0.2 cm}

In order to build infinite strongly rotarily transitive connected graphs, we adapt this strategy. Given a subfield $\field$ of $\R$, let $P(\field)$ denote the projective plane over  $\field$, i.e.\ the set consisting of the 1-dimensional $\field$-linear-subspaces of $\field^2$. It is equipped with the following distance:
$$
d(p,q):=\min\{d_2(x,y):x\in \R p\cap S_2,~ y \in \R q\cap S_2\}.
$$
When $\field$ is equal to $\R$, this metric space is called the \defini{elliptic plane}.

\begin{prop}
\label{projbq}
Let $\field$ denote a subfield of $\R$ which is closed under taking square roots of positive elements. Let $\ell \in (0,\pi/4)$ be such that $\cos(\ell)\in\field$ and there is an equilateral triangle of side length $\ell$ in $S_2$ with angle $\alpha \notin \pi \Q$. Then, every $\ell$-isometry of $P(\field)$ is an isometry. Besides, $\mathcal{G}(P(\field),d,\ell)$ is connected and its diameter is $\lceil \frac{\pi}{2\ell}\rceil$.
\end{prop}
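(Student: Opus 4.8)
The plan is to prove the three assertions of Proposition~\ref{projbq} in turn: first that every $\ell$-isometry of $P(\field)$ is an isometry, then that $\mathcal{G}(P(\field),d,\ell)$ is connected, and finally that its diameter equals $\lceil \frac{\pi}{2\ell}\rceil$. The overall strategy is to transfer information between the sphere $(S_2,d_2)$ and the projective plane $P(\field)$ through the natural two-to-one antipodal projection $\pi : S_2 \to P(\R)$ sending $x$ to the line $\R x$, keeping careful track of the subfield $\field$.

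For the first assertion, I would exploit the hypothesis that there is an equilateral triangle of side length $\ell$ in $S_2$ whose angle $\alpha \notin \pi\Q$. The idea is that an $\ell$-isometry $\varphi$ must preserve such equilateral configurations (three mutually $\ell$-distant points), and by iterating the "flip" that reflects one vertex of such a triangle across the opposite edge, one generates, because $\alpha \notin \pi\Q$, a dense set of directions/angles. This is the standard Beckman--Quarles mechanism: from preservation of a single distance $\ell$ one bootstraps, via rigid equilateral gadgets, to the preservation of a dense family of distances, and then continuity-type or density arguments force $\varphi$ to preserve all distances, i.e.\ to be a genuine isometry. The role of $\field$ being closed under square roots of positive elements and of $\cos(\ell)\in\field$ is to guarantee that the relevant triangle-completion and angle-bisection constructions stay inside $P(\field)$, so that the gadgets actually exist as vertices of our graph; the role of $\ell<\pi/4$ is presumably to ensure the equilateral triangle is nondegenerate and embeds honestly in the elliptic metric (where the diameter is $\pi/2$). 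I expect this first part to be the main obstacle, as it is where all the arithmetic hypotheses on $\field$ and $\ell$ are consumed and where the honest geometric work lies.

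For connectedness, I would argue that given two points $p,q\in P(\field)$ one can build a finite chain $p=p_0,p_1,\dots,p_n=q$ with $d(p_i,p_{i+1})=\ell$ for each $i$. The natural approach is to show first that any two points at distance less than $2\ell$ (or within a single "$\ell$-step reach") can be joined by a short path, using again the square-root-closure of $\field$ to produce an intermediate point at distance exactly $\ell$ from both; then one concatenates such local moves along a geodesic from $p$ to $q$. Since the elliptic plane has finite diameter $\pi/2$, only finitely many steps are ever needed, which simultaneously sets up the diameter computation.

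For the diameter, the upper bound $\lceil \frac{\pi}{2\ell}\rceil$ should follow from the connectedness argument: along a minimizing geodesic of length at most $\pi/2$, one can take steps of size exactly $\ell$ (adjusting the last step, or inserting an auxiliary vertex, with the help of $\field$'s closure properties), using at most $\lceil \frac{\pi}{2\ell}\rceil$ edges. For the matching lower bound I would invoke a triangle-inequality estimate: a path with $k$ edges connects points at distance at most $k\ell$, so if $d(p,q)$ is close to the maximal value $\pi/2$ then at least $\lceil \frac{\pi}{2\ell}\rceil$ edges are required; exhibiting an explicit pair of points realizing (or approaching) the diameter $\pi/2$, which exists in $P(\field)$ because $\field$ is dense enough in $\R$, then pins the diameter to exactly $\lceil \frac{\pi}{2\ell}\rceil$.
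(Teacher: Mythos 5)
Your outline follows the same general strategy as the paper (bootstrap from the single distance $\ell$ to a dense family of distances via equilateral gadgets with angle $\alpha\notin\pi\Q$, use the square-root closure of $\field$ to realise the gadgets inside $P(\field)$, and use chains of $\ell$-steps for connectedness and the diameter). But there is a genuine gap at the decisive step of the first assertion: you pass from ``$\varphi$ preserves a dense family of distances'' to ``$\varphi$ is an isometry'' by invoking ``continuity-type or density arguments''. The map $\varphi$ is only assumed to be a bijection preserving the relation $d=\ell$ in both directions; it carries no continuity whatsoever, and preserving a dense set of \emph{exact} distances does not by itself pin down $d(\varphi(p),\varphi(q))$ for a pair whose distance lies outside that set. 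The paper closes this gap by upgrading from equalities to inequalities: writing $\ell_n=d(y,r^n(y))$ for the distances produced by the rotation of angle $\alpha$, it first characterises $d(p,q)\le 2\ell_n$ purely in terms of the relation $d=\ell_n$ (existence of a common apex $o$ with $d(o,p)=d(o,q)=\ell_n$, which is where the square-root closure and $\cos(\ell_n)\in\field$ enter), then chains this to show $\varphi$ preserves the relation $d(p,q)\le k\ell_n$ for every $k\ge 2$. Choosing $\ell_n<\varepsilon$ (possible since $\alpha\notin\pi\Q$) and $k$ with $k\ell_n<d(p,q)\le(k+1)\ell_n$ gives the two-sided sandwich $|d(p,q)-d(\varphi(p),\varphi(q))|<\ell_n$, which is the actual substitute for continuity. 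Without some version of this interval-preservation step your argument does not conclude.

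Two smaller points. First, to deduce that an $\ell$-isometry is an $\ell_n$-isometry you need an \emph{if and only if} certificate for ``$d(p,q)=\ell_n$'' expressible using only the distance $\ell$; the paper's certificate is a non-backtracking fan $p_0,\dots,p_n$ around a common centre $o$, all at distance $\ell$ from $o$ and consecutively at distance $\ell$, with $p_i\ne p_{i+2}$ forcing the chain to rotate consistently by $\alpha$. One direction of this equivalence uses that $\ell<\pi/4$ makes the projection $S_2\to P(\R)$ an isometry on balls of radius $\ell$, and the other uses the apex-construction observation; your ``flip across the opposite edge'' gadget is a different device and you would still have to prove both implications for it. Second, your diameter argument is essentially right, but note that the last, shorter step of a chain cannot be taken ``along the geodesic'' with length exactly $\ell$; the paper handles the residual segment of length at most $2\ell$ by a single common apex, which is the clean way to get the upper bound $\lceil\frac{\pi}{2\ell}\rceil$.
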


\begin{proof}
Let $(\field, \ell, \alpha)$ satisfy the assumptions of Proposition~\ref{projbq}.
Let $\rpdeux$ denote $P(\field)$. Let $\varphi : \rpdeux \to \rpdeux$ be an $\ell$-isometry. We will show that $\varphi$ is an isometry, that $\mathcal{G}(\rpdeux,d,\ell)$ is connected and that its diameter is $\lceil \frac{\pi}{2\ell}\rceil$.

Let $r$ denote a rotation of angle $\alpha$ in $\mathsf{SO}(3)$. Let $x$ belong to its axis and let $y\in S_2$ be at distance $\ell$ from $x$. For $n\geq 0$, define $\ell_n$ to be $d(y,r^n(y))$. Note that this number does not depend on the choice of $(r,x,y)$, as long as it is legit. We claim that for every $n$, the bijection $\varphi$ is an $\ell_n$-isometry.

\begin{figure}[h!!]
\centering
\includegraphics[width=8 cm]{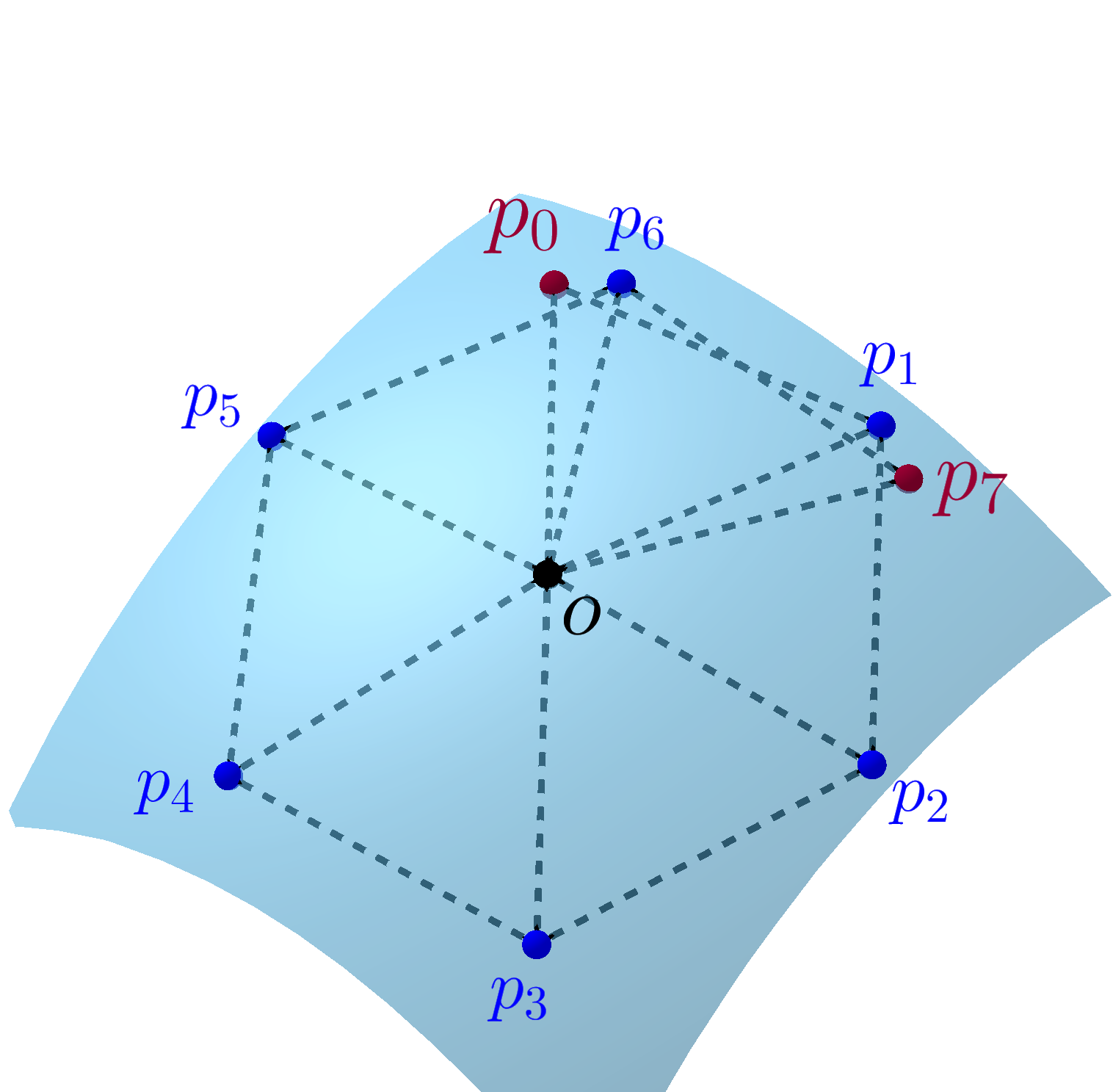}

\vspace{0.36 cm}

{This figure illustrates the characterisation of being\\at distance $\ell_n$. On the picture, $n$ is equal to 7.}

\vspace{0.2 cm}
\end{figure}

To prove this, it suffices to establish the following characterization. Two points $p$ and $q$ of $\rpdeux$ are at distance $\ell_n$ if and only if there are a point $o\in \rpdeux$ and points $p_0,\dots,p_n \in \rpdeux$ such that the following conditions hold:
\begin{itemize}
\item $p_0=p$,
\item $p_n=q$,
\item $\forall i \leq n,~d(p_i,o)=\ell$,
\item $\forall i <n,~d(p_i,p_{i+1})=\ell$,
\item $\forall i < n-1,~ p_i \not=p_{i+2}$.
\end{itemize}
If $\field=\R$, this is easy: the corresponding statement with $S_2$ instead of $\rpdeux$ holds and, because $\ell<\pi/4$, the projection map from $S_2$ to $\rpdeux$ induces an isometry from any closed ball of radius $\ell$ to its image. As a result, even without assuming that $\field=\R$, if a suitable $(o,p_0,\dots,p_n)$ exists, then $d(p,q)$ is equal to $\ell_n$.  The converse results from the following observation.

\begin{obs}Let $\ell'\in (0,\pi/2)$ be such that $\cos(\ell')\in \field$. Let $(p',q')\in \rpdeux^2$ be such that $d(p',q')\leq 2\ell'$. Let $x\in \R p'\cap S_2$ and $y \in \R q'\cap S_2$ be such that $d(p',q')=d_2(x,y)$. Let $z$ be a vector of $\field^3\backslash \{0\}$ which is orthogonal to $x$ and $y$ (the Gram-Schmidt process provides such vectors). Then, there is some positive $\lambda\in \field$ such that $r':= \field(x+y+\lambda z)$ satisfies $d(p',r')=d(r',q')=\ell'$.

\begin{small}Indeed, $d(p',r')=d(r',q')$ always holds, $\cos(\ell')$ belongs to $\field$, the field $\field$ is closed under taking square roots of positive elements, and $d(p',r')=\ell'$ is equivalent to the following polynomial equation of degree 2 with coefficients in $\field$: $$\cos(\ell')^2\|x+y+\lambda z\|_2^2=\langle x,x+y\rangle^2.$$
Besides, since $d_2(x,y)\leq 2\ell'$, taking $\lambda$ close enough to 0 or $\infty$ allows us to observe both inequalities in the equation above. Also notice that as $\field$ is closed under taking square roots of positive elements, $x$ and $y$ belong to $\field^3$.\end{small}
\end{obs}

As we have proved the observation, the characterisation of being at distance $\ell_n$ is established.

\saut

Given a subset $A$ of $\R$, say that $\varphi$ is an $A$-isometry if
$$
\forall x,y \in \rpdeux,~d(p,q)\in A \iff d(\varphi(p),\varphi(q))\in A.
$$
Let $\ell' \in (0,\pi/2)$ be such that $\cos(\ell')\in \field$. By the observation, we have $$\forall p,q\in \rpdeux,~d(p,q)\leq 2\ell' \iff \exists o\in \rpdeux,~d(o,p)=d(o,q)=\ell'.$$
Moreover, for any $k\geq 2$ and $p,q \in \rpdeux$, we have $d(p,q)\leq k\ell'$ if and only if there are $p_0,\dots,p_k$ in $\rpdeux$ such that:
\begin{itemize}
\item $p_0=p$,
\item $p_k=q$,
\item $\forall i <k-1,~d(p_i,p_{i+1})\in\{0,\ell'\}$,
\item $d(p_{k-1},p_k)\leq 2\ell'$.
\end{itemize}
As a result, if $\varphi$ is an $\ell'$-isometry, then it is a $[0,k\ell']$-isometry for every $k\geq 2$. Furthermore, the graph $\mathcal{G}(P,d,\ell')$ is connected and has diameter $\min (2, \lceil\frac{pi}{2\ell'}\rceil)$.

Let $p$ and $q$ belong to $P$. Let $\varepsilon >0$ and let us show that
$$|d(p,q)-d(\varphi(p),\varphi(q))| < \epsilon.$$
Since this is clear if $p=q$, we assume that $d(p,q)>0$.
As $\alpha$ does not belong to $\pi \Q$, we can pick an $n$ such that $\ell_n \in (0,\epsilon)$ and $2\ell_n <d(p,q)$. Let $k$ be the integer such that $k\ell_n < d(p,q)\leq (k+1)\ell_n$. Notice that $k\geq 2$ and $\cos(\ell_n)\in \field$. Since $\varphi$ is a $[0,k\ell_n]$- and a $[0,(k+1)\ell_n]$-isometry, we have $k\ell_n < d(\varphi(p),\varphi(q))\leq (k+1)\ell_n$. As a result, $|d(p,q)-d(\varphi(p),\varphi(q))| < \ell_n <\varepsilon$. The bijection $\varphi$ is thus an isometry.
\end{proof}

Recall the following proposition.

\begin{prop}
\label{proproj}
Let $\field$ be a subfield of $\R$ that is closed under taking square roots of positive elements. Every isometry of $P(\field)$ is induced by an element of $\mathsf{SO}(3,\R)\cap\mathsf{GL}(3,\field)$.
\end{prop}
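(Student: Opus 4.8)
The plan is to reduce the statement to the already-understood real case and then transfer rationality from the image of the standard frame. Write $P:=P(\field)$. Since $\Q\subseteq\field$, the set $P$ contains $P(\Q)$ and is therefore dense in the compact (hence complete) metric space $P(\R)$. An isometry $\varphi$ of $P$ is distance-preserving, hence $1$-Lipschitz and uniformly continuous, so it extends uniquely to a distance-preserving map $\bar\varphi:P(\R)\to P(\R)$; applying the same construction to $\varphi^{-1}$ and composing shows that $\bar\varphi$ is bijective, i.e.\ an isometry of $P(\R)$ whose restriction to $P$ is $\varphi$. It then suffices to prove two things: that every isometry of $P(\R)$ is induced by an element $M$ of $\mathsf{SO}(3,\R)$, and that the particular $M$ obtained from $\bar\varphi$ actually has all its entries in $\field$.

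For the first point I would work with the standard orthonormal frame. The lines $\R e_1,\R e_2,\R e_3$ are pairwise at distance $\pi/2$, so their images under an isometry $\Phi$ of $P(\R)$ are again pairwise orthogonal lines; choosing unit spanning vectors with suitable signs, $\Phi$ agrees on the frame with some $M_0\in\mathsf{SO}(3,\R)$, and replacing $\Phi$ by $M_0^{-1}\Phi$ I may assume $\Phi$ fixes $\R e_1,\R e_2,\R e_3$. For a line $\R x$ with unit representative $x=(x_1,x_2,x_3)$ one has $d(\R x,\R e_i)=\arccos|x_i|$, so $\Phi(\R x)$ is spanned by a vector whose coordinates have the same absolute values as those of $x$; that is, $\Phi$ acts by a sign change of coordinates. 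On the complement of the three coordinate lines --- which consists of four open connected ``triangles'' --- this sign pattern is locally constant, hence constant on each triangle. Composing with a diagonal $\pm1$ matrix in $\mathsf{SO}(3,\R)$ so that $\Phi$ also fixes $\R(1,1,1)$, the isometry condition $|\langle x,y\rangle|=|\langle\Phi x,\Phi y\rangle|$ tested against this fixed line forces $|\sum_i\epsilon_ix_i|=|\sum_ix_i|$ on each triangle, and two linear functionals equal in absolute value on a nonempty open set must be $\pm$ one another, so the constant sign pattern is trivial; thus $\Phi=\mathrm{id}$ on a dense set, hence everywhere. (Alternatively, this rigidity of the elliptic plane follows by lifting $\Phi$ through the double cover $S_2\to P(\R)$ to an isometry of $S_2$ and invoking Theorem~\ref{thmeverling} and Proposition~\ref{proportho}.)

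It remains to see that the $M\in\mathsf{SO}(3,\R)$ inducing $\bar\varphi$ lies in $\mathsf{GL}(3,\field)$. For each $i$, the $i$-th column of $M$ is a unit vector spanning the line $M\cdot\R e_i=\bar\varphi(\R e_i)=\varphi(\R e_i)$, which is an element of $P$, hence a line spanned by some $w_i\in\field^3\setminus\{0\}$. Since $\langle w_i,w_i\rangle$ is a positive element of $\field$ and $\field$ is closed under taking square roots of positive elements, $\|w_i\|\in\field$, so the unit vectors $\pm w_i/\|w_i\|$ lie in $\field^3$; therefore $Me_i\in\field^3$. Thus $M\in\mathsf{SO}(3,\R)\cap\mathsf{GL}(3,\field)$, and because $M$ has entries in $\field$ it maps $P$ into $P$, where it agrees with $\bar\varphi=\varphi$. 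Hence $M$ induces $\varphi$, as required.

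The conceptual crux --- and the step I expect to be the main obstacle --- is that the rigidity must be established over $\R$: when $\field\neq\R$ the space $P$ is totally disconnected, so the ``locally constant sign pattern is constant'' argument is simply unavailable on $P$ itself, which is exactly why one first passes to the dense completion $P(\R)$. Consequently the real case (rigidity of the elliptic plane) carries all the technical weight, whether one argues by the frame-and-connectivity method above or by reduction to the sphere via Theorem~\ref{thmeverling} and Proposition~\ref{proportho}; once it is in hand, descending the matrix entries to $\field$ is immediate from closure under square roots.
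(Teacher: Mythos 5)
Your proof is correct, but note that the paper does not actually prove Proposition~\ref{proproj}: it cites an elementary argument (Bryant's MathOverflow answer, plus Berger) for the case $\field=\R$ and simply asserts that this argument goes through for any subfield closed under taking square roots of positive elements. Your route is genuinely different and, in one respect, more robust: rather than re-running a real-variable proof inside $P(\field)$ --- where, as you rightly stress, connectivity arguments are unavailable since $P(\field)$ is totally disconnected for $\field\neq\R$ --- you extend the isometry to the completion $P(\R)$ by density and uniform continuity, settle rigidity there (either by your frame-and-sign-pattern argument, which checks out, or by lifting through the double cover to $S_2$ and invoking Proposition~\ref{proportho}), and then descend the matrix entries to $\field$ by normalising the $\field$-rational spanning vectors of $\varphi(\field e_i)$, which is exactly where the square-root hypothesis enters. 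This completion-and-descent scheme is self-contained where the paper is not, and it cleanly separates the analytic content (rigidity of the elliptic plane) from the arithmetic content (descent of the matrix to $\field$). Two small points of wording: the ``three coordinate lines'' whose complement has four components must be the three projective lines $\{x_i=0\}$ (the coordinate triangle), not the three points $\R e_1,\R e_2,\R e_3$ of $P(\R)$, whose complement is connected; and the paper's definition of $P(\field)$ via $\field^2$ is evidently a typo for $\field^3$, which you have silently and correctly repaired.
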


For an elementary proof of this result when $\field=\R$, see \cite{projrefmo} or 9.7.1 and 19.1.2.2 in \cite{bergeom1, bergeom2}. The proof of \cite{projrefmo} holds for any subfield of $\R$ that is closed under taking square roots of positive elements.

\setcounter{section}{1}
\setcounter{theorem}{2}

\begin{thm}
\label{county}
For every integer $k\geq 2$, there is an infinite strongly rotarily transitive connected graph with countably many vertices and diameter $k$.
\end{thm}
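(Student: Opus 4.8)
The plan is to realise each required graph as a unit-distance graph $\mathcal{G}(P(\field),d,\ell)$ over a suitable countable subfield $\field\subset\R$, so that Propositions~\ref{projbq} and~\ref{proproj} do all the work. By definition of $\mathcal{G}(P(\field),d,\ell)$, a bijection of $P(\field)$ is a graph automorphism if and only if it is an $\ell$-isometry; provided $(\field,\ell)$ meets the hypotheses of Proposition~\ref{projbq}, these $\ell$-isometries are exactly the isometries of $P(\field)$, and by Proposition~\ref{proproj} each of them is induced by some $R\in\mathsf{SO}(3,\R)\cap\mathsf{GL}(3,\field)$. I would thus reduce strong rotary transitivity to two facts about this matrix group: that it acts transitively on the lines of $\field^3$, and that every one of its elements fixes such a line.

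Both facts use that $\field$ is closed under square roots of positive elements. For transitivity, given two points $\field a,\field b$ I normalise: since $\|a\|^2=\sum a_i^2\in\field_{>0}$, the square root $\|a\|$ lies in $\field$, so $u:=a/\|a\|$ and $v:=b/\|b\|$ are unit vectors in $\field^3$. If $u\neq\pm v$, the Rodrigues formula for the rotation carrying $u$ to $v$ about the axis $u\times v$ has entries built from $u\cdot v\in\field$, from $\sqrt{1-(u\cdot v)^2}\in\field$, and from $u\times v\in\field^3$, so it lies in $\mathsf{SO}(3,\field)$; when $u=\pm v$ the two points coincide. For the ``by rotations'' part, every $R\in\mathsf{SO}(3,\R)$ is a rotation about an axis, i.e.\ $\ker(R-I)\neq 0$; since $R$ has entries in $\field$, the space $\ker_\field(R-I)$ is a nonzero $\field$-subspace of $\field^3$, of $\field$-dimension $1$ when $R\neq I$ (the rank of $R-I$ is the same over $\field$ as over $\R$). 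This $\field$-line is a point of $P(\field)$ fixed by $R$, while $R=I$ fixes everything. Hence $\Aut(\mathcal{G}(P(\field),d,\ell))$ acts transitively and by rotations, and the graph is countable because $\field$ is.

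It remains to choose $(\field,\ell)$ so that Proposition~\ref{projbq} applies and the diameter $\lceil\frac{\pi}{2\ell}\rceil$ equals $k$. For $k\geq 3$ this is where the arithmetic goes: pick a rational $c\in(\cos\frac{\pi}{2(k-1)},\cos\frac{\pi}{2k})$ and set $\ell:=\arccos c$, so that $\ell\in(\frac{\pi}{2k},\frac{\pi}{2(k-1)})\subseteq(0,\pi/4)$ and $\lceil\frac{\pi}{2\ell}\rceil=k$. Taking $\field$ to be the smallest subfield of $\R$ containing $\Q$ and closed under positive square roots makes $\cos\ell=c\in\field$, and $\field$ is countable, being the union of the tower $F_0=\Q$, $F_{n+1}=F_n(\{\sqrt{x}:x\in F_n,\ x>0\})$. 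Finally the equilateral-triangle hypothesis is automatic: the spherical law of cosines gives, for the side-$\ell$ equilateral triangle, $\cos\alpha=\frac{\cos\ell}{1+\cos\ell}=\frac{c}{1+c}\in(0,\tfrac12)$, a rational number, so by Niven's theorem $\alpha\notin\pi\Q$. Proposition~\ref{projbq} then delivers all the properties at once, yielding a strongly rotarily transitive connected countable graph of diameter $k$.

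The main obstacle is the case $k=2$, which this construction cannot reach: Proposition~\ref{projbq} requires $\ell<\pi/4$, forcing $\frac{\pi}{2\ell}>2$ and hence diameter at least $3$. To obtain diameter $2$ I would argue separately, the natural candidate being the ``disk'' graph on $P(\field)$ in which $p$ and $q$ are joined whenever $0<d(p,q)\leq 2\ell$, for some $\ell\in(\pi/8,\pi/4)$ with $\cos\ell\in\field$. By the Observation in the proof of Proposition~\ref{projbq}, this is exactly the square of $\mathcal{G}(P(\field),d,\ell)$, so it is connected of diameter $\lceil\frac{\pi}{4\ell}\rceil=2$, and it inherits all the isometries as fixed-point-bearing automorphisms. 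The delicate point---and the real content of the $k=2$ case---is to rule out \emph{new}, fixed-point-free automorphisms, i.e.\ to show that every $(0,2\ell]$-isometry of $P(\field)$ is again an isometry; I would adapt the small-distance recovery argument of Proposition~\ref{projbq}, reconstructing the lengths $\ell_n\to 0$ from the $\leq 2\ell$ relation, to establish this.
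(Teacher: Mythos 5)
Your treatment of the case $k\geq 3$ is correct and is essentially the paper's argument: choose $\ell\in(0,\pi/4)$ with $\lceil\frac{\pi}{2\ell}\rceil=k$ satisfying the hypotheses of Proposition~\ref{projbq}, take $\field$ to be the smallest subfield of $\R$ containing $\cos\ell$ and closed under square roots of positive elements, and conclude via Propositions~\ref{projbq} and~\ref{proproj} together with the fact that any $R\in\mathsf{SO}(3,\R)\cap\mathsf{GL}(3,\field)$ fixes a line of $\field^3$. Your extra details (Niven's theorem via $\cos\alpha=\frac{c}{1+c}$, Rodrigues for transitivity, and the rank argument showing $\ker_\field(R-I)\neq 0$) are all sound and make explicit what the paper leaves implicit.

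The case $k=2$, however, is a genuine gap: you correctly identify that ruling out fixed-point-free automorphisms of your ``disk'' graph is the real content, and then you defer exactly that step (``I would adapt the small-distance recovery argument\dots''). It is not clear that this adaptation works: the chain characterisation of $d(p,q)=\ell_n$ in Proposition~\ref{projbq} is built from the \emph{exact}-distance-$\ell$ relation, and recovering any exact distance from the single relation $0<d(p,q)\leq 2\ell$ is a different and harder problem; nothing in your sketch produces such a reconstruction. The paper sidesteps this entirely with a different construction: it takes $\ell=\pi/2$, i.e.\ the polarity graph $\mathcal{G}(P(\field),d,\pi/2)$ over $\field=\overline{\Q}\cap\R$. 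There, an automorphism need not be shown to be an isometry at all; instead, since the points at distance $\pi/2$ from $p$ are exactly those lying in the plane orthogonal to $p$, every graph automorphism preserves collinearity, hence is a collineation, hence (by the fundamental theorem of projective geometry, using that $\field$ has no non-trivial field automorphisms) is induced by some element of $\mathsf{GL}(3,\field)$ --- not merely of $\mathsf{SO}(3)$. Fixed points then come from the fact that every cubic over $\overline{\Q}\cap\R$ has a root there, so every element of $\mathsf{GL}(3,\field)$ has an eigenvector in $\field^3\setminus\{0\}$. To complete your proof you would either need to carry out the $(0,2\ell]$-isometry rigidity you postponed, or switch to an argument of this projective-geometric kind for diameter $2$.
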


\setcounter{section}{3}

\begin{proof}
For $k\geq 3$, one can find $\ell\in (0,\pi/4)$ such that $\lceil \frac{\pi}{2\ell}\rceil =k$ and there is an equilateral triangle of side length $\ell$ in $S_2$ with angle $\alpha \notin \pi \Q$. Fix such an $\ell$ and set $\field$ to be the smallest subfield of $\R$ that contains $\cos(\ell)$ and is closed under taking square roots of positive elements. By Proposition~\ref{projbq} and Proposition~\ref{proproj}, the graph $\mathcal{G}(P(\field),d,\ell)$ satisfies the required properties: indeed, $1\in \field$ is an eigenvalue of any element of $\mathsf{SO}(3,\R)\cap\mathsf{GL}(3,\field)$.

Let us build an infinite strongly rotarily transitive connected graph with countably many vertices and diameter $2$. Let $\field$ denote a countable subfield of $\R$ such that every polynomial of degree 3 with coefficients in $\field$ has at least one root in $\field$ and that is closed under taking square roots of positive elements; for instance, take $\field$ to be $\overline{\Q}\cap \R$, where $\overline{\Q}\subset \C$ denotes the algebraic closure of $\Q$. Consider the graph $\mathcal{G}(P(\field),d,\pi/2)$. Since every plane of $\field^3$ that passes through the origin is characterised by its orthogonal $L \subset \field^3$, every automorphism of this graph is a collineation, i.e.\ a bijection $\varphi$ from $P(\field)$ to itself such that for every $p$, $q$ and $r$ in $P(\field)$, if there is a plane of $\field^3$ that contains $p$, $q$ and $r$, then there is one that contains $\varphi(p)$, $\varphi(q)$ and $\varphi(r)$. As a subfield of $\R$ which is closed under taking square roots of positive elements, $\field$ has no non-trivial automorphism: the fundamental theorem of projective geometry thus guarantees that every collineation of $P(\field)$ is induced by an element of $\mathsf{GL}(3,\field)$. We conclude by noticing that, as every polynomial of degree 3 with coefficients in $\field$ has at least one root in $\field$, every element of $\mathsf{GL}(3,\field)$ has an eigenvector in $\field^3\backslash\{0\}$.
\end{proof}

\begin{rem}
In the proof of Theorem~\ref{county}, if one was not looking for countable examples, one could take $\field=\R$, thus providing more visual examples. Also notice that we have established connectedness in a direct way, without resorting to the Axiom of Choice. Finally, it must be noted that every graph with diameter 1 must be complete and have at least 2 vertices, and in particular cannot be strongly rotarily transitive if one assumes the Axiom of Choice.
\end{rem}

\section{Several questions}

\label{sec:quest}

In this section, we collect several questions on rotarily transitive graphs.
Recall that a graph is \defini{nice} if it is non-empty, connected and locally finite.

\begin{quest}
\label{questrong}
Is there a nice infinite strongly rotarily transitive graph?
\end{quest}

\begin{quest}
Is there a nice infinite graph that can be endowed with a free transitive action but can also be endowed with a rotarily transitive action?
\end{quest}

\begin{quest}
Which are the groups that can act in a rotarily transitive and faithful way on a (nice) graph?
\end{quest}

\begin{quest}
Is there an infinite strongly rotarily transitive connected graph with (countably many vertices and) infinite diameter?
\end{quest}

The \defini{degree} of a vertex in a graph is its number of neighbours. The degree of a non-empty transitive graph is the unique value taken by its degree function, i.e.~the degree of any of its vertices.

\begin{quest}
What is the minimal degree of a nice infinite (strongly) rotarily transitive graph?
\end{quest}

We have seen at the end of Section~\ref{algebra} that there is a nice graph that can be endowed with a rotarily transitive group action such that any vertex has a finite stabiliser.

\begin{quest}
Is there a nice graph that can be endowed with a faithful rotarily transitive group action such that any vertex has an infinite stabiliser?
\end{quest}

\begin{quest}
Is there a nice infinite strongly rotarily transitive graph $\mathcal{G}$ such that the stabiliser of any vertex for the usual action $\Aut(\mathcal{G})\acts \mathcal{G}$ is finite?
\end{quest}

\begin{quest}
Is there a nice infinite strongly rotarily transitive graph $\mathcal{G}$ such that the stabiliser of any vertex for the usual action $\Aut(\mathcal{G})\acts \mathcal{G}$ is infinite?
\end{quest}

\paragraph{Acknowledgements.} I would like to thank Laurent Veysseire for a valuable discussion, and Mark Sapir and David Madore for sharing useful mathematical knowledge. I am also grateful to the Weizmann Institute of Science and my postdoctoral hosts, Itai Benjamini and Gady Kozma, for the freedom of research they have provided to me.

\end{document}